%The source code in LaTeX for this paper was written by
%Agnieszka Dutka, agnieszka.dutka [at] gmail.com
\documentclass[12pt, a4paper, leqno]{article}
\usepackage[margin=2.3cm]{geometry}
\usepackage[utf8]{inputenc} %---latin2, cp1250
\usepackage{lmodern}
\usepackage[T1]{fontenc}
\usepackage[english]{babel}

\usepackage[runin]{abstract}
\usepackage{titling}

\usepackage{amsmath}
\usepackage{amsthm}
\usepackage{amsfonts}
\usepackage{amssymb}
\usepackage{mathtools}

\usepackage{enumitem}

\usepackage{cite}

%\textheight 615pt
%\textwidth 360pt

%\usepackage{comment}

%---abstract
\setlength{\abstitleskip}{-\parindent}
\setlength{\absleftindent}{0pt}
\setlength{\absrightindent}{0pt}
\abslabeldelim{.}
%---abstract END

%---keywords abstract-like
\newcommand{\keywordsname}{Key words}
\makeatletter
\newcommand{\keywords}[1]{%
\def\thekeywords{#1}%
\begin{@bstr@ctlist}
\hspace*{\abstitleskip}{\abstractnamefont\keywordsname\@bslabeldelim}\abstracttextfont\
#1%
\par\end{@bstr@ctlist}
}
\makeatother
%---keywords abstract-like END

%---subjclass abstract-like
\newcommand{\subjclassname}{Mathematics subject classification}
\makeatletter
\newcommand{\subjclass}[2][2020]{%
\begin{@bstr@ctlist}
\hspace*{\abstitleskip}{\abstractnamefont\subjclassname\ (#1)\@bslabeldelim}\abstracttextfont\
#2%
\par\end{@bstr@ctlist}
}
\makeatother
%---subjclass abstract-like END

%---\and from latex source, modified
\makeatletter
\def\and{%				%begin{tabular}
	\end{tabular}%
	%\hskip 1em \@plus.17fil%
	and%
	\begin{tabular}[t]{c}}%
					%\end{tabular}
\makeatother
%--\and from latex source, modified END

%--address and email amsart-like, modified
\makeatletter
\let\addresses\@empty      %\let\thankses\@empty
\newcommand{\address}[2][]{\g@addto@macro\addresses{\address{#1}{#2}}}
\newcommand{\curraddr}[2][]{\g@addto@macro\addresses{\curraddr{#1}{#2}}}
\newcommand{\email}[2][]{\g@addto@macro\addresses{\email{#1}{#2}}}
\newcommand{\urladdr}[2][]{\g@addto@macro\addresses{\urladdr{#1}{#2}}}
%
%---[...]
%
\def\enddoc@text{%\ifx\@empty\@translators \else\@settranslators\fi
  \ifx\@empty\addresses \else\@setaddresses\fi}
\AtEndDocument{\enddoc@text}
%---[...]
\def\emailaddrname{E-mail address}
%---[...]
\def\@setaddresses{\par
  \nobreak \begingroup
%\footnotesize
%
%---[...]
%
%  \def\\{\unskip, \ignorespaces}%
  \interlinepenalty\@M
  \def\address##1##2{\begingroup%
    \par\addvspace\bigskipamount%\indent
    \@ifnotempty{##1}{(\ignorespaces##1\unskip) }%
    {\noindent\ignorespaces##2}\par\endgroup}%
%
%---[...]
%
  \def\email##1##2{\begingroup
    \@ifnotempty{##2}{\nobreak\noindent\emailaddrname
      \@ifnotempty{##1}{, \ignorespaces##1\unskip}\/:\space
      \ttfamily##2\par}\endgroup}%
%
%---[...]
%
  \addresses
  \endgroup
}

\makeatother
%---address and email amsart-like, modified END

%---enumerate-like lists

\newlist{conditions}{enumerate}{1}
\newlist{exconditions}{enumerate}{1}
\newlist{iconditions}{enumerate}{1}
\newlist{inthm}{enumerate}{1}
\setlist[conditions]{label=\normalfont(\alph*),ref=\normalfont(\alph*)}
\setlist[exconditions]{label=\normalfont(\roman*),ref=\normalfont(\roman*),wide,labelindent=0pt}
\setlist[iconditions]{label=\normalfont(\roman*),ref=\normalfont(\roman*)}
\setlist[inthm]{label=\normalfont(\thetheorem.\arabic*),ref=\normalfont(\thetheorem.\arabic*),wide,labelindent=0pt}

%---enumerate-like lists END

%---math commands and environments

%---hyphen in math mode
\mathchardef\mhyphen="2D
%---hyphen in math mode END

\newcommand{\CB}{\mathbb{C}}
\newcommand{\F}{\mathbb{F}}
\newcommand{\G}{\mathbb{G}}
\newcommand{\HB}{\mathbb{H}}
\newcommand{\PB}{\mathbb{P}}
\newcommand{\R}{\mathbb{R}}
\newcommand{\SB}{\mathbb{S}}
\newcommand{\T}{\mathbb{T}}
\newcommand{\Z}{\mathbb{Z}}

\newcommand{\C}{\mathcal{C}}

\newcommand{\U}{\mathcal{U}}
\newcommand{\VC}{\mathcal{V}}
\newcommand{\XC}{\mathcal{X}}
\newcommand{\YC}{\mathcal{Y}}

\newcommand{\Cinfty}{\C^{\infty}}

\newcommand{\GL}{\mathrm{GL}}
\newcommand{\ON}{\mathrm{O}}

\DeclareMathOperator{\dist}{dist}
\DeclareMathOperator{\Hom}{Hom}
\DeclareMathOperator{\Ker}{Ker}

\newtheorem{theorem}{Theorem}[section]
\newtheorem{corollary}[theorem]{Corollary}
\newtheorem{lemma}[theorem]{Lemma}
\newtheorem{proposition}[theorem]{Proposition}
\theoremstyle{definition}
\newtheorem*{acknowledgements}{Acknowledgements}
\newtheorem{definition}[theorem]{Definition}
\newtheorem{example}[theorem]{Example}
\newtheorem{notation}[theorem]{Notation}

%---abs and norm
\DeclarePairedDelimiter\abs{\lvert}{\rvert}%
\DeclarePairedDelimiter\norm{\lVert}{\rVert}%

% Swap the definition of \abs* and \norm*, so that \abs
% and \norm resizes the size of the brackets, and the 
% starred version does not.
\makeatletter
\let\oldabs\abs
\def\abs{\@ifstar{\oldabs}{\oldabs*}}
\let\oldnorm\norm
\def\norm{\@ifstar{\oldnorm}{\oldnorm*}}
\makeatother
%---abs and norm END

%---math commands and environments END

\title{Approximation and homotopy in regulous geometry}
\date{}
\author{Wojciech Kucharz}

\address{Wojciech Kucharz\\Institute of Mathematics\\Faculty of Mathematics and Computer
Science\\Jagiellonian University\\\L{}ojasiewicza 6\\30-348
Krak\'ow\\Poland}
\email{Wojciech.Kucharz@im.uj.edu.pl}

\usepackage[pdftex, pdfauthor={\theauthor}, pdftitle={\thetitle}]{hyperref}

\begin{document}
\maketitle
\thispagestyle{empty}

\begin{abstract}
Let $X$, $Y$ be nonsingular real algebraic sets. A map $\varphi \colon X \to Y$ is said to be $k$-regulous,
where $k$ is a nonnegative integer, if it is of class $\C^k$ and the restriction of $\varphi$ to some Zariski open
dense subset of $X$ is a regular map. Assuming that $Y$ is uniformly rational, and $k \geq 1$, we prove that a $\Cinfty$ map $f \colon X \to Y$ can be approximated
by $k$-regulous maps in the $\C^k$ topology if and only if $f$ is homotopic to a $k$-regulous map. The class of uniformly rational real algebraic varieties includes spheres, Grassmannians and rational nonsingular surfaces, and is stable under blowing up nonsingular centers. 
Furthermore, taking $Y=\SB^p$ (the unit $p$-dimensional sphere), we obtain several new results
on approximation of $\Cinfty$ maps from $X$ into~$\SB^p$ by $k$-regulous maps in the $\C^k$ topology, for $k \geq 0$.
\end{abstract}

\keywords{Real algebraic variety, regular map, \texorpdfstring{$k$}{k}-regulous map,
approximation, homotopy, malleable variety, unit sphere.}
\subjclass{14P05, 14P25, 26C15, 57R99.}

\hypersetup{pdfkeywords={\thekeywords}}

\section{Introduction}\label{sec:1}
Regulous geometry has recently emerged as a subfield of real algebraic geometry. It deals with rational maps that admit continuous extensions or extensions satisfying certain differentiability conditions. In the following, we develop new methods that lead to a much better understanding of the relationship between the concepts of approximation and homotopy of maps in the framework of regulous geometry.

Throughout this paper we use the term \emph{real algebraic variety} to
mean a ringed space with structure sheaf of $\R$-algebras of $\R$-valued
functions, which is isomorphic to a Zariski locally closed subset of
real projective $n$-space $\PB^n(\R)$, for some $n$, endowed with the
Zariski topology and the sheaf of regular functions. This is compatible
with \cite{bib4, bib53B}, which contain a detailed exposition of real algebraic
geometry. Recall that each real algebraic variety in the sense used here
is actually affine, that is, isomorphic to an algebraic subset of $\R^n$
for some $n$, see \cite[Proposition~3.2.10 and Theorem~3.4.4]{bib4}.
Morphisms of real algebraic varieties are called \emph{regular maps}.
Each real algebraic variety carries also the Euclidean topology
determined by the usual metric on $\R$. Unless explicitly stated
otherwise, all topological notions relating to real algebraic varieties
refer to the Euclidean topology.

As a matter of convention, all $\Cinfty$ manifolds will be Hausdorff and
second countable. The space $\C^k(M,N)$ of $\C^k$ maps between $\Cinfty$
manifolds, where $k$ is a nonnegative integer or $k=\infty$, is endowed
with the $\C^k$ topology (see \cite[pp.~34, 36]{bib27} or
\cite[p.~311]{bib53} for the definition of this topology and note that in
\cite{bib27} it is called the weak $\C^k$ topology; the $\C^0$ topology
is just the compact-open topology).

Let $X$, $Y$ be two nonsingular real algebraic varieties. A map $f
\colon X \to Y$ is said to be \emph{regulous} if it is continuous on~$X$
and there exists a Zariski open dense subset~$U$ of~$X$ such that the
restriction $f|_U \colon U \to Y$ is a regular map. Let $X(f)$ denote
the union of all such~$U$. The complement $P(f) \coloneqq X \setminus
X(f)$ of $X(f)$ is the smallest Zariski closed subset of~$X$ for which
the restriction $f|_{X \setminus P(f)} \colon X \setminus P(f) \to Y$ is
a regular map. If $f(P(f)) \neq Y$, we say that $f$ is a \emph{nice}
regulous map. In the literature regulous maps are also called \emph{continuous rational maps} \cite{bib31, bib32, bib34, bib36, bib37, bib38, bib40, bib43, bib44} or \emph{stratified-regular maps} \cite{bib39, bib46, bib54}. The concise name ``regulous'' was coined by Fichou, Huisman, Mangolte and Monnier \cite{bib17}. Since the publication of \cite{bib34} in 2009 several mathematicians have devoted their attention to regulous maps, see \cite{bib3, bib15,
bib16, bib17, bib18, bib19, bib20, 
bib21, bib31, bib32, bib34, bib36,
bib37, bib38, bib39, bib40, bib41, bib42, bib43, bib44, bib45, bib46,
bib47, bib50, bib54, bib55} and the references therein.

A map $f \colon X \to Y$ is said to be
\emph{$k$-regulous}, where $k$ is a nonnegative integer or $k = \infty$,
if it is both regulous and of class $\C^k$. Thus, less formally, a $k$-regulous map is a $\C^k$ map that admits a rational representation. Obviously, ``$0$-regulous''
is the same as ``regulous''. As observed in
\cite[Proposition~2.1]{bib34}, $\infty$-regulous maps coincide with
regular maps, and these are usually studied separately. A standard example of a $k$-regulous function, with $k$ a~nonnegative integer, is $f \colon \R^2 \to \R$ defined by
\begin{equation*}
    f(x,y) = \frac{x^{3+k}}{x^2+y^2} \quad \text{for } (x,y) \neq (0,0) \quad \text{and} \quad f(0,0)=0.
\end{equation*}
Clearly, $f$ is not of class $\C^{k+1}$.

We say that a $\C^l$ map $f \colon X \to Y$ can be \emph{approximated by
$k$-regulous maps in the $\C^k$ topology}, where $0 \leq k \leq l \leq \infty$, if for every neighborhood $\U$
of $f$ in $\C^k(X,Y)$ there exists a $k$-regulous map that belongs to
$\U$. Investigating whether or not the map $f$ admits approximation by
$k$-regulous maps in the $\C^k$ topology, we may assume without loss of
generality that $f$~is of class $\Cinfty$. This is justified since the
set $\Cinfty(X,Y)$ is dense in the space $\C^k(X,Y)$.

\begin{definition}\label{def-1-1}
An $n$-dimensional real algebraic variety $Y$ is said to be \emph{uniformly rational} if every point in $Y$ has a Zariski open neighborhood that is biregularly isomorphic to a Zariski open subset of $\R^n$.
\end{definition}

Clearly, every uniformly rational real algebraic variety is nonsingular of pure dimension. An intriguing open question posed by Gromov is whether every rational nonsingular variety is uniformly rational, see \cite[p.~885]{bib26} and \cite{bib14A} for the discussion involving complex varieties.

One of our main results is the following.

\begin{theorem}\label{th-1-2}
Let $k$ be a positive integer, $X$ a nonsingular real algebraic
variety, and $Y$ a uniformly rational real algebraic variety. Then, for a $\Cinfty$ map ${f \colon X \to Y}$, the following
conditions are equivalent:
\begin{conditions}
\item\label{th-1-2-a} $f$ can be approximated by $k$-regulous maps in
the $\C^k$ topology.

\item\label{th-1-2-b} $f$ is homotopic to a $k$-regulous map.
\end{conditions}
\end{theorem}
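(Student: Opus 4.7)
For \ref{th-1-2-a} $\Rightarrow$ \ref{th-1-2-b}, since $k \geq 1$ the $\C^k$ topology refines the $\C^0$ topology. Embed $Y$ algebraically in some $\R^N$ as a closed nonsingular submanifold and choose a smooth retraction $r \colon U \to Y$ of a tubular neighborhood $U \supset Y$. Any $k$-regulous map $h$ sufficiently $\C^0$-close to $f$ then satisfies $(1-t)f(x) + th(x) \in U$ for all $(x,t) \in X \times [0,1]$, so $(x,t) \mapsto r\bigl((1-t)f(x) + th(x)\bigr)$ yields a continuous homotopy from $f$ to $h$; the existence of such an $h$ follows from \ref{th-1-2-a}.

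The substantial direction is \ref{th-1-2-b} $\Rightarrow$ \ref{th-1-2-a}. Fix a $k$-regulous $g \colon X \to Y$ and a homotopy $H \colon X \times [0,1] \to Y$ with $H_0 = f$ and $H_1 = g$, which may be perturbed to be of class $\C^k$. The plan is to propagate a $k$-regulous approximation ``backwards in $t$'' from $g$ to $f$ using the algebraic local structure of $Y$. After reducing to the case $X$ compact by a standard exhaustion argument, use the uniform rationality of $Y$ to cover it by Zariski open sets $V_\alpha$ admitting biregular isomorphisms $\psi_\alpha \colon V_\alpha \to W_\alpha \subset \R^n$, where $n = \dim Y$. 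By compactness of $X \times [0,1]$ and the Lebesgue number lemma applied to $\{H^{-1}(V_\alpha)\}$, choose a subdivision $0 = t_0 < \cdots < t_m = 1$ and a finite open cover $X = \bigcup_\beta X_\beta$ so that each $H(X_\beta \times [t_{i-1}, t_i])$ lies in a single chart $V_{\alpha(\beta,i)}$. Writing $f_i \coloneqq H_{t_i}$, construct inductively $k$-regulous maps $G_i$ that are $\C^k$-close to $f_i$: set $G_m \coloneqq g$, and at each step convert the problem, locally on $X_\beta$, into approximating the $\C^k$ map $\psi_{\alpha(\beta,i)} \circ f_{i-1} \colon X_\beta \to W_{\alpha(\beta,i)} \subset \R^n$ by $k$-regulous maps, achieved by componentwise polynomial approximation in the $\C^k$ topology on a compact subset of $\R^n$. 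After $m$ steps one obtains a $k$-regulous $G_0$ that is $\C^k$-close to $f = f_0$.

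The chief obstacle is the global gluing: combining the local, chart-wise $k$-regulous approximations defined over the various $X_\beta$ into a single globally $k$-regulous map on $X$ that is $\C^k$-close to $f$. This will require $k$-regulous partitions of unity subordinate to $\{X_\beta\}$ (available thanks to $k \geq 1$ together with the algebraic structure), along with careful $\C^k$-error bookkeeping so that, first, each polynomial approximation has image contained in the corresponding Zariski open set $W_{\alpha(\beta,i)}$, and, second, the errors do not accumulate catastrophically through the $m$ inductive steps. A secondary technicality is the noncompact case, where successive gluing over an exhaustion $X = \bigcup_j X_j$ requires $k$-regulous cutoff constructions that preserve previously built approximations on earlier compact pieces without destroying $k$-regularity.
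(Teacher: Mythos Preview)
Your argument for \ref{th-1-2-a}$\Rightarrow$\ref{th-1-2-b} is essentially correct, though for noncompact $X$ one should note (as the paper does) that $X$ deformation retracts onto a compact subset, so $\C^0$-closeness on that piece suffices for homotopy.

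For \ref{th-1-2-b}$\Rightarrow$\ref{th-1-2-a} there is a genuine gap, and it is exactly the one you flag as the ``chief obstacle''. A $k$-regulous partition of unity cannot glue local $k$-regulous approximations into a global one: if $g_\beta \colon X_\beta \to Y \subset \R^N$ are $k$-regulous and $\{\varphi_\beta\}$ is a subordinate partition of unity, then $\sum_\beta \varphi_\beta g_\beta$ lands in $\R^N$, not in $Y$, and composing with a smooth tubular retraction $r \colon U \to Y$ destroys rationality since $r$ is not regular. More fundamentally, a $k$-regulous map must restrict to a \emph{single} regular map on some Zariski open dense subset of $X$; patching different rational representations over a Euclidean open cover $\{X_\beta\}$ does not produce that. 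Your inductive scheme is also unmotivated as stated: if each $f_{i-1}$ could be approximated chartwise without reference to $G_i$, then neither the homotopy nor the induction would be doing any work, so the hypothesis \ref{th-1-2-b} would never enter.

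The paper takes a route that avoids gluing over $X$ altogether. From uniform rationality it builds, for each positive $k$, a \emph{global} dominating $k$-regulous spray for $Y$: a trivial algebraic bundle $E = Y \times \R^n$ together with a $k$-regulous map $s \colon E \to Y$ with $s(y,0)=y$ and $d_{(y,0)}s(\{y\}\times\R^n)=T_yY$. The local rational charts on $Y$ are used only to build $s$, and they are combined not by a partition of unity but by \emph{composition} of local sprays, each damped by a high power of a regular function vanishing on the complement of its chart (this is where $k<\infty$ is used). Once $s$ exists, the approximation is a single global step: the homotopy from the $k$-regulous $g$ to $f$ lifts, on any relatively compact $U_0\subset X$, to a $\C^k$ map $\eta \colon U_0 \to \R^n$ with $s(g(x),\eta(x))=f(x)$; Weierstrass-approximate $\eta$ by a global regular $\beta \colon X \to \R^n$ and set $G(x)=s(g(x),\beta(x))$, which is $k$-regulous by composition. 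Thus the ``local-to-global'' passage happens once, on the target $Y$, rather than repeatedly on the source $X$.
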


%The implication \ref{th-1-2-a}$\Rightarrow$\ref{th-1-2-b} is obvious
%because any two continuous maps from $X$ into~$Y$ that are sufficiently close in the $\C^0$
%topology are homotopic. 
It is an open question whether Theorem~\ref{th-1-2} holds for $k=0$ or $k=\infty$. We also have a more general result, Theorem~\ref{th-4-2}, in which the target variety $Y$ need not be rational.

The following example illustrates the scope of applicability of Theorem~\ref{th-1-2}.\goodbreak

\begin{example}\label{ex-1-3}
Here are some uniformly rational real algebraic varieties.

\begin{exconditions}[widest=iii]
\item\label{ex-1-3-i} For any nonnegative integer~$n$, the unit $n$-sphere
\begin{equation*}
\SB^n \coloneqq \{ (x_0,\ldots,x_n) \in \R^{n+1} : x_0^2 + \cdots+ x_n^2
= 1 \}
\end{equation*}
is uniformly rational because $\SB^n$ with one point removed is biregularly isomorphic to~$\R^n$.

\item\label{ex-1-3-ii} Let $\F$ stand for $\R$, $\CB$ or $\HB$, where $\HB$ is the (skew) field of quaternions. The Grassmannian $\G_d(\F^n)$ of $d$-dimensional $\F$-vector subspaces of $\F^n$ can be regarded as a real algebraic variety (see \cite[pp.~72, 73, 352]{bib4}) and as such is uniformly rational.

\item\label{ex-1-3-iii} Rational nonsingular real algebraic surfaces are uniformly rational. As detailed in \cite[Subsection~2.2]{bib53A}, this follows from Comessatti's theorem \cite[p.~257]{bib17A}, whose modern proofs are given in \cite[Theorem~30]{bib33A} and \cite[p.~137, Proposition~6.4]{bib56A}.

\item\label{ex-1-3-iv} Blow-ups with nonsingular centers of uniformly rational varieties remain uniformly rational, and the proof given in \cite[p.~885]{bib26} and \cite{bib14A} in a complex setting also works for real algebraic varieties.
\end{exconditions}
\end{example}

All previous results on approximation by $k$-regulous maps concern maps with values in Grassmann varieties \cite{bib34, bib45, bib46, bib54} or unit spheres \cite{bib3, bib34, bib36, bib37, bib40, bib42, bib43, bib46, bib47, bib55}. Theorem~\ref{th-1-2} does not provide any new information in the former case (at least for $X$ compact), but opens up new possibilities in the latter.

In view of Theorem~\ref{th-1-2} and Example~\ref{ex-1-3}\ref{ex-1-3-i}, we get immediately the following result on maps into $\SB^p$.

\begin{corollary}\label{cor-1-4}
Let $k$ be a positive integer, $X$ a nonsingular
real algebraic variety, and $p$
a nonnegative integer. Then, for a $\Cinfty$ map
$f \colon X \to \SB^p$, the following conditions are equivalent:
\begin{conditions}
\item\label{cor-1-4-a} $f$ can be approximated by $k$-regulous maps in
the $\C^k$ topology.

\item\label{cor-1-4-b} $f$ is homotopic to a $k$-regulous map.\qed
\end{conditions}
\end{corollary}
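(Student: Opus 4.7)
The plan is to derive the corollary as an immediate specialization of Theorem~\ref{th-1-2}, taking $Y = \SB^p$. The only hypothesis requiring verification is that $\SB^p$ is uniformly rational, which is the content of Example~\ref{ex-1-3}\ref{ex-1-3-i}.

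To substantiate that assertion, I would fix an arbitrary point $y \in \SB^p$ and exhibit a Zariski open neighborhood of $y$ biregular to a Zariski open subset of $\R^p$. Choose any $q \in \SB^p$ with $q \neq y$ (for instance, $q = -y$). Stereographic projection from $q$ is a map $\SB^p \setminus \{q\} \to \R^p$ whose components are rational functions of the ambient coordinates with denominator nonvanishing on $\SB^p \setminus \{q\}$, and whose inverse is likewise given by explicit rational formulas on $\R^p$. Hence stereographic projection is a biregular isomorphism onto all of $\R^p$, and $\SB^p \setminus \{q\}$ is the required Zariski open neighborhood of $y$. This verifies Definition~\ref{def-1-1} pointwise.

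With uniform rationality of $\SB^p$ in hand, Theorem~\ref{th-1-2} applied to the target $Y = \SB^p$ delivers exactly the asserted equivalence \ref{cor-1-4-a}~$\Leftrightarrow$~\ref{cor-1-4-b}. There is no genuine obstacle at this stage: the corollary is a direct specialization of the main theorem to a particularly important class of targets, and the real difficulty has already been absorbed into the proof of Theorem~\ref{th-1-2}.
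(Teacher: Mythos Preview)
Your proposal is correct and matches the paper's approach exactly: the paper states that Corollary~\ref{cor-1-4} follows immediately from Theorem~\ref{th-1-2} together with Example~\ref{ex-1-3}\ref{ex-1-3-i}, and your argument via stereographic projection is precisely the content of that example.
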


 Up to now, Corollary~\ref{cor-1-4} with $X$ compact and $\dim X \geq p \geq 1$   has only been known for three special values
of~$p$, namely, $p=1,2$ or $4$ \cite[Corollary~3.8]{bib34}. Since $\infty$-regulous is the same as regular, the value $k=\infty$ is allowed in Corollary~\ref{cor-1-4} according to \cite[Corollary~1.2]{bib13}.

In Theorem~\ref{th-1-2} and Corollary~\ref{cor-1-4} the
integer~$k$ is assumed to be positive, that is, the case $k=0$ is
excluded (which perhaps is not necessary). However, we have the
following criterion involving nice regulous maps, which are $0$-regulous
by definition.

\begin{corollary}\label{cor-1-5}
Let $X$ be a compact nonsingular real algebraic variety and let $k$, $p$
be two nonnegative integers. Assume that a $\Cinfty$ map $f \colon X \to
\SB^p$ is homotopic to a nice regulous map. Then $f$ can be approximated
by $k$-regulous maps in the $\C^k$ topology.
\end{corollary}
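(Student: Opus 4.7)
The plan is to reduce the corollary to showing that $f$ is homotopic to a regular map $X \to \SB^p$, after which standard results finish the job. Once this reduction is established, \cite[Corollary~1.2]{bib13} provides approximation of $f$ by regular maps in the $\C^\infty$ topology; since regular maps are $k$-regulous for every $k$ and the $\C^\infty$ topology is finer than the $\C^k$ topology, the desired $\C^k$-approximation of $f$ by $k$-regulous maps follows at once. (For $k \geq 1$ one could alternatively invoke Corollary~\ref{cor-1-4} directly.)

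Let $g \colon X \to \SB^p$ be a nice regulous map with $f$ homotopic to $g$; it therefore suffices to show that $g$ is itself homotopic to a regular map. By the niceness hypothesis, there exists a point $y_0 \in \SB^p \setminus g(P(g))$, and compactness of $X$ together with continuity of $g$ yields an open neighborhood $W$ of $P(g)$ with $\overline{g(W)} \subset \SB^p \setminus \{y_0\}$. The biregular stereographic projection $\pi \colon \SB^p \setminus \{y_0\} \to \R^p$ converts $g|_W$ into a continuous $\R^p$-valued function that is regular on $W \setminus P(g)$; meanwhile, $g$ is already regular on $X \setminus P(g)$. Smoothing $\pi \circ g$ across $P(g)$ by means of a $\Cinfty$ bump function supported in $W$, then approximating the result uniformly by a regular $\R^p$-valued function on the compact nonsingular affine variety $X$ via classical polynomial approximation on algebraic sets, and finally pushing the outcome back into $\SB^p$ via $\pi^{-1}$ and gluing with $g$ on the complement of a smaller neighborhood of $P(g)$, yields a regular map $X \to \SB^p$ that can be made to lie in any prescribed $\C^0$-neighborhood of $g$. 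A sufficiently close such approximation is homotopic to $g$ via the renormalized straight-line homotopy in $\R^{p+1}$.

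The principal obstacle is this last gluing-and-approximation construction, in which the niceness hypothesis plays an essential role: the point $y_0$ is precisely what allows the singular behavior of $g$ on $P(g)$ to be transferred to an affine target where classical regular-approximation theorems apply. Compactness of $X$ is needed both for the uniform approximation step and to guarantee that a sufficiently $\C^0$-close map is in fact homotopic to $g$.
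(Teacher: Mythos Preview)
Your proposed reduction to the claim that the nice regulous map $g$ is homotopic to a \emph{regular} map cannot succeed: this claim is false in general. Combining Corollary~\ref{cor-1-7} (every $\Cinfty$ map $X \to \SB^p$ with $\dim X = p$ is approximable by nice regulous maps) with the fact, recalled in Example~\ref{ex-1-10}, that for even $n$ every regular map $\T^n \to \SB^n$ is null homotopic, one obtains nice regulous maps $\T^n \to \SB^n$ of nonzero degree that are not homotopic to any regular map. This is precisely why Example~\ref{ex-1-10} notes that $k=\infty$ must be excluded in Corollary~\ref{cor-1-5}; your argument, were it valid, would produce regular approximations and hence prove the $k=\infty$ case as well.

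The specific construction also breaks down at the ``gluing'' step. Patching the regular map $\pi^{-1}\circ(\text{polynomial approximation})$ with $g$ on the complement of a neighborhood of $P(g)$ via a $\Cinfty$ cutoff yields only a $\Cinfty$ map, not a regular one; regularity is not preserved under multiplication by nonconstant smooth bump functions. Without that gluing, the map you construct is close to $g$ only near $P(g)$ and close to the constant $\pi^{-1}(0)$ elsewhere, so it is not a $\C^0$-approximation of $g$ on all of $X$. The paper's proof avoids the regular category entirely: it invokes \cite[Theorem~2.4]{bib34} to upgrade the nice regulous map in the same homotopy class to a nice $l$-regulous map for some integer $l>k$, and then applies Corollary~\ref{cor-1-4} (which requires $l \geq 1$).
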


\begin{proof}
To deal with the case $k=0$ we choose an integer $l > k$. Since $f$ is
homotopic to a nice regulous map, it is also homotopic to a nice
$l$-regulous map \cite[Theorem~2.4]{bib34}. Therefore, by
Corollary~\ref{cor-1-4}, $f$ can be approximated by $l$-regulous maps in
the $\C^l$ topology. The conclusion follows.
\end{proof}

In connection with Corollary~\ref{cor-1-5}, it is natural to raise the
question whether every regulous map from $X$ into $\SB^p$ is homotopic
to a nice regulous map. According to \cite[Theorem~2.4]{bib34}, the
continuous maps into unit spheres that are homotopic to nice regulous
maps are characterized in terms of framed cobordism classes via the
Pontryagin--Thom construction. Next we focus on approximation by nice
$k$-regulous maps.

Let $X$ be a nonsingular real algebraic variety. If $Z$~is a nonsingular
Zariski locally closed subset of~$X$, then its Zariski closure~$V$
in~$X$ is of the form $V=Z \cup W$, where $W$~is a Zariski closed subset
of~$X$ with $Z \cap W = \varnothing$ and $\dim W < \dim Z$. Clearly,
$Z$~is precisely the nonsingular locus of~$V$, assuming that $Z$~is
closed in~$ X$ (in the Euclidean topology). An illustrative example is
provided by $Z=C \setminus \{(0:0:1)\}$, where $C$~is the singular cubic
curve
\begin{equation*}
C \coloneqq \{ (x:y:z) \in \PB^2(\R) : y^2z - x^3 - x^2z = 0 \}
\end{equation*}
in the real projective plane $\PB^2(\R)$.\goodbreak

A compact $\Cinfty$ submanifold~$M$ of~$X$ is said to admit a \emph{weak
algebraic approximation} if, for every neighborhood $\U$ of the
inclusion map $M \hookrightarrow X$ in the space $\Cinfty(M,X)$, there
exists a $\Cinfty$ embedding $e \colon M \to X$ in~$\U$ such that $e(M)$
is a nonsingular Zariski locally closed subset of~$X$.

Assume that $X$~is compact. A $\Cinfty$ map $f \colon X \to \SB^p$ is
said to be \emph{adapted} (resp. \emph{weakly adapted}) if there exists
a regular value $y \in \SB^p$ for $f$ such that $f^{-1}(y)$ is a
nonsingular Zariski locally closed subset of~$X$ (resp. the $\Cinfty$
submanifold $f^{-1}(y)$ of~$X$ admits a weak algebraic approximation).

Our main result on approximation of $\Cinfty$ maps into unit spheres by
nice $k$-regulous maps is the following.

\begin{theorem}\label{th-1-6}
Let $X$ be a compact nonsingular real algebraic variety and let $k$, $p$
be two integers, with $k \geq 0$, $p \geq 1$. Then, for a $\Cinfty$ map
$f \colon X \to \SB^p$, the following conditions are equivalent:
\begin{conditions}
\item\label{th-1-6-a} $f$ can be approximated by nice $k$-regulous maps
in the $\C^k$ topology.

\item\label{th-1-6-b} $f$ can be approximated by adapted $\Cinfty$ maps
in the $\C^k$ topology.

\item\label{th-1-6-c} $f$ can be approximated by weakly adapted $\Cinfty$ maps in
the $\C^k$ topology.
\end{conditions}
\end{theorem}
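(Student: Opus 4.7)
The plan is to prove the cycle of implications $\ref{th-1-6-a} \Rightarrow \ref{th-1-6-b} \Rightarrow \ref{th-1-6-c} \Rightarrow \ref{th-1-6-a}$. The middle implication $\ref{th-1-6-b} \Rightarrow \ref{th-1-6-c}$ is immediate from the definitions, since every nonsingular Zariski locally closed subset $Z$ of $X$ tautologically admits a weak algebraic approximation given by the inclusion $Z \hookrightarrow X$ itself.

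For $\ref{th-1-6-a} \Rightarrow \ref{th-1-6-b}$, I would start with a nice $k$-regulous map $g$ that $\C^k$-approximates $f$. By compactness of $X$ and niceness of $g$, the set $\SB^p \setminus g(P(g))$ is open and nonempty. For $k \geq 1$, Sard's theorem yields a regular value $y$ of $g$ in this open set. Since $y \notin g(P(g))$, the fiber $g^{-1}(y)$ sits in the Zariski open set $X \setminus P(g)$, where $g$ restricts to a regular map; transversality of $g$ to $y$ in the $\C^1$ and algebraic senses coincide, so $g^{-1}(y)$ is nonsingular and Zariski closed in $X \setminus P(g)$, hence Zariski locally closed in $X$. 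A $\Cinfty$-approximation of $g$ in $\C^k$ followed by a small ambient isotopy that returns the new fiber over $y$ to $g^{-1}(y)$ then produces an adapted $\Cinfty$ map close to $f$ in $\C^k$. The borderline case $k=0$ is handled by replacing Sard with an algebraic genericity argument applied to the regular restriction $g|_{X \setminus P(g)}$ to locate $y$ with $g^{-1}(y)$ nonsingular.

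The substantial direction is $\ref{th-1-6-c} \Rightarrow \ref{th-1-6-a}$, which I would carry out in two steps. Let $g$ be a weakly adapted $\Cinfty$ map $\C^k$-approximating $f$, with regular value $y$ and $M \coloneqq g^{-1}(y)$ admitting a weak algebraic approximation. The first step promotes weak adaptedness to adaptedness: apply the weak algebraic approximation to obtain a $\Cinfty$ embedding $e \colon M \to X$ close to the inclusion with $M' \coloneqq e(M)$ nonsingular Zariski locally closed, extend $e$ to a $\Cinfty$ ambient diffeomorphism $\psi$ of $X$ close to the identity (via a tubular neighborhood and the isotopy extension theorem), and set $h \coloneqq g \circ \psi^{-1}$; by the chain rule, $h$ is $\Cinfty$, $\C^k$-close to $f$, and adapted with $h^{-1}(y) = M'$. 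The second step produces a nice $k$-regulous $\tilde h$ that $\C^k$-approximates $h$ through an algebraic Pontryagin--Thom construction near $M'$: fix an algebraic tubular neighborhood of $M'$ in a Zariski open subset of $X$, approximate the $\Cinfty$ framing of the normal bundle coming from $dh$ at $M'$ by a rational framing, and define $\tilde h$ near $M'$ by normalizing this rational framing in a local chart on $\SB^p$ around $y$, glued to the constant value $-y$ outside a slightly larger neighborhood by a regulous cut-off built from the algebraic defining ideal of $M'$.

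The principal obstacle lies in this last algebraic Pontryagin--Thom step. The normal bundle of $M'$ in $X$ is $\Cinfty$-trivial, because $y$ is a regular value of $h$, but it need not be algebraically trivial; approximating the $\Cinfty$ framing by a rational one while maintaining $\C^k$-closeness to $h$ relies on density results for algebraic sections of strongly algebraic bundles over nonsingular affine varieties. A second delicate point is preserving the niceness condition $\tilde h(P(\tilde h)) \neq \SB^p$, which I would arrange by designing the construction so that $P(\tilde h)$ lies outside the chosen neighborhood of $M'$, on which $\tilde h$ is identically $-y$, guaranteeing that $-y \notin \tilde h(P(\tilde h))$.
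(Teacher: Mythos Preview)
Your proof of $\ref{th-1-6-a}\Rightarrow\ref{th-1-6-b}$ and your first step of $\ref{th-1-6-c}\Rightarrow\ref{th-1-6-a}$ (promoting weakly adapted to adapted via an ambient isotopy) are essentially the paper's arguments, with only cosmetic differences. One small point: for $\ref{th-1-6-a}\Rightarrow\ref{th-1-6-b}$ you should apply Sard to the regular restriction $g|_{X\setminus P(g)}$ (which is $\Cinfty$) rather than to $g$ itself, since $g$ is only $\C^k$ and Sard can fail when $k<\dim X-p+1$; the paper does exactly this. Also, your final sentence about niceness is garbled: if $P(\tilde h)$ lies in the region where $\tilde h\equiv -y$, then $\tilde h(P(\tilde h))\subseteq\{-y\}$, so $-y$ is \emph{in} the image, not out of it; what you need (and what this gives) is $\tilde h(P(\tilde h))\neq\SB^p$.

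The genuine gap is in your second step of $\ref{th-1-6-c}\Rightarrow\ref{th-1-6-a}$. Your Pontryagin--Thom style construction builds $\tilde h$ as a normalized rational framing near $M'$ glued to the constant $-y$ outside a slightly larger neighborhood. Such a $\tilde h$ is indeed nice $k$-regulous and has $\tilde h^{-1}(y)=M'$, so it is \emph{homotopic} to $h$; but there is no reason it is $\C^k$-\emph{close} to $h$. Away from $M'$ the map $h$ can take arbitrary values in $\SB^p$, while your $\tilde h$ is forced to be $-y$ there, so $\|\tilde h-h\|_{\C^0}$ is typically of order $1$. This is exactly the obstacle the paper's machinery is built to overcome. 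The paper first invokes \cite[Theorems~2.4 and~2.5]{bib34} (which is essentially the Pontryagin--Thom construction you sketch) to obtain a nice $l$-regulous map $\varphi$ merely \emph{homotopic} to $f$, with $\varphi(P(\varphi))\subseteq\{-y_0\}$. Then the new ingredient---Corollary~\ref{cor-4-4}, which rests on the dominating-spray theory of Sections~\ref{sec:2} and~\ref{sec:3}---is used to upgrade homotopy to $\C^l$-approximation by maps that are regular on each stratum of a stratification $\XC$ with $X\setminus P(\varphi)\in\XC$. Because $f(P(\varphi))$ misses $y_0$, any such approximating map $\tilde f$ still has $\tilde f(P(\varphi))\not\ni y_0$, and since $P(\tilde f)\subseteq P(\varphi)$, the map $\tilde f$ is nice $k$-regulous. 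Without a mechanism that turns ``homotopic to a nice regulous map'' into ``$\C^k$-approximable by nice regulous maps,'' your argument does not close.
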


Using Theorem~\ref{th-1-6}, we can obtain two approximation results that do not require any technical assumptions.

\begin{corollary}\label{cor-1-7}
Let $X$ be a compact nonsingular real algebraic variety of dimension~$p$ and let $k$ be a nonnegative integer. Then every $\Cinfty$ map from $X$
into $\SB^p$ can be approximated by nice $k$-regulous maps in the $\C^k$
topology.
\end{corollary}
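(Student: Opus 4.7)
The plan is to reduce the corollary to Theorem~\ref{th-1-6} and to show that, under the dimensional hypothesis $\dim X = p$, the given $\Cinfty$ map $f$ is itself adapted. Once this is established, condition \ref{th-1-6-b} of Theorem~\ref{th-1-6} holds trivially for~$f$ (every neighborhood of $f$ in the $\C^k$ topology contains $f$ itself), and the equivalence with \ref{th-1-6-a} immediately yields the desired approximation by nice $k$-regulous maps in the $\C^k$ topology.

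To verify that $f$ is adapted, I would first invoke Sard's theorem to pick a regular value $y \in \SB^p$ of $f$. Because $\dim X = p = \dim \SB^p$, the fiber $f^{-1}(y)$ is a $\Cinfty$ submanifold of $X$ of dimension~$0$, and the compactness of $X$ forces it to be a finite subset of~$X$.

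The final observation is that a finite subset of a real algebraic variety is automatically Zariski closed, hence Zariski locally closed, and, being $0$-dimensional, is nonsingular as a real algebraic subvariety of~$X$. Thus $f^{-1}(y)$ is a nonsingular Zariski locally closed subset of~$X$, and $f$ satisfies the definition of an adapted $\Cinfty$ map recorded just before Theorem~\ref{th-1-6}. There is no genuine obstacle in this corollary: the substantive content is carried entirely by Theorem~\ref{th-1-6}, and the only additional ingredient is the elementary remark that in the equidimensional case $\dim X = \dim \SB^p$ every regular fiber of $f$ is automatically a nonsingular Zariski locally closed subset of~$X$.
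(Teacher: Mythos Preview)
Your proposal is correct and follows exactly the paper's approach: the paper's proof is the single sentence ``Since $\dim X = p$, every $\Cinfty$ map from $X$ into $\SB^p$ is adapted, and hence the conclusion follows from Theorem~\ref{th-1-6},'' and you have simply unpacked why the equidimensional hypothesis forces every $\Cinfty$ map to be adapted (regular fibers are finite, hence nonsingular Zariski closed). One minor remark: Theorem~\ref{th-1-6} is stated for $p \geq 1$, so strictly speaking the case $p=0$ (where $X$ is finite and every map is already regular) should be dismissed separately, though the paper itself glosses over this.
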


\begin{proof}
Since $\dim X = p$, every $\Cinfty$ map from $X$ into $\SB^p$ is adapted, and hence the conclusion follows from Theorem~\ref{th-1-6}.
\end{proof}

For maps between unit spheres we have the following.

\begin{theorem}\label{th-1-8}
Let $k$ be a nonnegative integer. Then, for every pair $(n,p)$ of
nonnegative integers, every $\Cinfty$ map from $\SB^n$ into $\SB^p$ can
be approximated by nice $k$-regulous maps in the $\C^k$ topology.
\end{theorem}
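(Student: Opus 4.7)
The plan is to apply Theorem~\ref{th-1-6} via its implication \ref{th-1-6-c}$\Rightarrow$\ref{th-1-6-a}: it suffices to show that $f$ is itself weakly adapted, since then $f$ lies in every $\C^k$-neighborhood of itself and condition \ref{th-1-6-c} holds trivially. So the whole task is to exhibit a regular value $y_0 \in \SB^p$ of $f$ whose preimage $f^{-1}(y_0)$ admits weak algebraic approximation in $\SB^n$. The degenerate case $p = 0$ falls outside the hypothesis of Theorem~\ref{th-1-6} but is immediate: for $n \geq 1$ the map $f$ is constant by connectedness, and for $n = 0$ it is a map of finite sets, so in either case $f$ is regular and hence nice $k$-regulous.

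Several of the remaining cases also dispatch themselves. If $n < p$, Sard's theorem provides a point $y_0 \in \SB^p \setminus f(\SB^n)$, and $f^{-1}(y_0) = \varnothing$ is vacuously a nonsingular Zariski locally closed subset, so $f$ is weakly adapted. If $n = p \geq 1$, any Sard-regular value $y_0$ yields a finite preimage, and a finite subset of $\SB^n$ is automatically nonsingular Zariski closed, being a finite union of points. So the only substantive case is $n > p \geq 1$.

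In this main case I would pick a regular value $y_0$ of $f$ by Sard and set $N \coloneqq f^{-1}(y_0)$, a compact $\Cinfty$ submanifold of $\SB^n$ of dimension $d = n - p$ with $0 < d < n$. The crucial algebraic fact is that $H_d(\SB^n, \Z/2) = 0$ in this dimensional range, so the homology class $[N] \in H_d(\SB^n, \Z/2)$ vanishes and is automatically represented by an algebraic cycle. This is exactly the hypothesis of an Akbulut--King-type approximation theorem: a compact $\Cinfty$ submanifold of a compact nonsingular real algebraic variety whose mod~$2$ homology class is algebraic can be $\Cinfty$-approximated by nonsingular Zariski closed (hence Zariski locally closed) subsets of the ambient variety. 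Applying this to $N \subset \SB^n$ shows that $N$ admits weak algebraic approximation, so $f$ is weakly adapted and Theorem~\ref{th-1-6} finishes the proof. The principal obstacle is this submanifold-approximation step; everything else reduces to case-splitting and Sard's theorem.
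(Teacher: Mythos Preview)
Your overall plan---show that every $\Cinfty$ map $f\colon\SB^n\to\SB^p$ is already weakly adapted and then apply Theorem~\ref{th-1-6}---is exactly the paper's strategy, and your treatment of the degenerate cases is fine (if more detailed than necessary). The problem is in the main case $n>p\geq 1$. The ``Akbulut--King-type approximation theorem'' you invoke, with hypothesis merely that the mod~$2$ homology class of the submanifold be algebraic, does not exist in that generality: algebraicity of $[N]\in H_d(X;\Z/2)$ is \emph{not} known to suffice for approximation of $N$ by nonsingular Zariski closed subsets of an arbitrary compact nonsingular ambient~$X$. The theorems in the literature that go in this direction (for instance Benoist's theorem \cite[Theorem~2.7]{bib2}, used later in the proof of Theorem~\ref{th-1-9}) require instead that the unoriented \emph{bordism} class of the inclusion be algebraic, together with a dimension restriction $2\dim N+1\leq\dim X$; here that would read $n\leq 2p-1$, which fails for all $n\geq 2p$. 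So as stated your key step is not justified, and the homology computation $H_d(\SB^n;\Z/2)=0$ does not by itself buy you what you need.

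The paper closes this gap with a one-line trick that makes the homology argument unnecessary. Since $\dim N=n-p<n$, the compact submanifold $N$ misses some point $a\in\SB^n$, and stereographic projection $\rho\colon\SB^n\setminus\{a\}\to\R^n$ is a biregular isomorphism. Now one applies Akbulut--King's actual Theorem~A \cite{bib1}, which says \emph{unconditionally} that every compact $\Cinfty$ submanifold of $\R^n$ admits weak algebraic approximation in~$\R^n$; pulling back along $\rho^{-1}$ gives the weak algebraic approximation of $N$ in~$\SB^n$. Replacing your appeal to a general approximation theorem by this reduction to~$\R^n$ fixes the argument completely.
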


\begin{proof}
Let $M$ be a compact $\Cinfty$ submanifold of~$\SB^n$ with $\dim M < n$.
Let $a$ be a point in $\SB^n \setminus M$ and let $\rho \colon \SB^n
\setminus \{a\} \to \R^n$ be the stereographic projection. By
\cite[Theorem~A]{bib1}, the $\Cinfty$ submanifold $\rho(M)$ of~$\R^n$
admits a weak algebraic approximation. Since $\rho$~is a biregular
isomorphism, $M$~admits a weak algebraic approximation in~$\SB^n$.
Consequently, if $p\geq1$, then every $\Cinfty$ map $\SB^n \to \SB^p$ is
weakly adapted, and hence the conclusion follows by
Theorem~\ref{th-1-6}. The case $p=0$ is trivial.
\end{proof}

It remains undecided whether or not for any pair $(n,p)$ of nonnegative
integers every $\Cinfty$ map $\SB^n \to \SB^p$ can be approximated by
regular maps in the $\Cinfty$ (or $\C^0$) topology, see \cite{bib13} for more information.

We now turn to a different characterization of the $\Cinfty$ maps into
unit spheres that can be approximated by nice $k$-regulous maps.

Let $X$ be a compact nonsingular real algebraic variety and let $p$ be
an integer with $0 \leq p \leq n \coloneqq \dim X$. Following
\cite[p.~19]{bib43}, we say that a cohomology class $v \in H^p(X;
\Z/2)$ is \emph{adapted} if the homology class in $H_{n-p}(X; \Z/2)$
Poincar\'e dual to~$v$ can be represented by a compact
$(n-p)$-dimensional $\Cinfty$ submanifold~$Z$ of~$X$, embedded with
trivial normal bundle, such that $Z$~is a nonsingular Zariski locally
closed subset of~$X$. Denote by $A^p(X; \Z/2)$ the subgroup of $H^p(X;
\Z/2)$ generated by all adapted cohomology classes.

\begin{theorem}\label{th-1-9}
Let $X$ be a compact nonsingular real algebraic variety and let $k$, $p$
be two integers, with $k \geq 0$, $2p \geq \dim X + 1$. Then, for a
$\Cinfty$ map $f \colon X \to \SB^p$, the following conditions are
equivalent:
\begin{conditions}
\item\label{th-1-9-a} $f$ can be approximated by nice $k$-regulous maps
in the $\C^k$ topology.

\item\label{th-1-9-b} $f$ can be approximated by nice regulous maps in
the $\C^0$ topology.

\item\label{th-1-9-c} $f$ is homotopic to a nice regulous map.

\item\label{th-1-9-d} $f^*(\sigma_p) \in A^p(X; \Z/2)$, where $f^*
\colon H^p(\SB^p; \Z/2) \to H^p(X; \Z/2)$ is the induced homomorphism
and $\sigma_p$ is the unique nonzero element in $H^p(\SB^p;\Z/2) \cong
\Z/2$.
\end{conditions}
\end{theorem}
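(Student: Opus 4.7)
The implications \ref{th-1-9-a}$\Rightarrow$\ref{th-1-9-b}$\Rightarrow$\ref{th-1-9-c} are essentially immediate: every $k$-regulous map is regulous, the $\C^k$ topology refines the $\C^0$ topology, and any $\C^0$-sufficiently close approximation of $f$ is homotopic to $f$.

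For \ref{th-1-9-c}$\Rightarrow$\ref{th-1-9-d}, I would fix a nice regulous map $g\colon X\to\SB^p$ homotopic to $f$, so that $f^*(\sigma_p)=g^*(\sigma_p)$, and select a point $y\in\SB^p$ which is at once a regular value of the regular map $g|_{X\setminus P(g)}$ (a generic condition by Sard's theorem) and lies outside the proper closed subset $g(P(g))$ (possible because $g$ is nice). Writing $n\coloneqq\dim X$, the preimage $g^{-1}(y)\subset X\setminus P(g)$ is then a compact nonsingular Zariski locally closed subset of $X$ of dimension $n-p$, embedded with trivial normal bundle $g^*T_y\SB^p$, whose $\Z/2$-homology class is Poincar\'e dual to $g^*(\sigma_p)=f^*(\sigma_p)$. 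This exhibits $f^*(\sigma_p)$ as an adapted class.

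The heart of the argument is \ref{th-1-9-d}$\Rightarrow$\ref{th-1-9-a}. By Theorem~\ref{th-1-6} it is enough to show that $f$ is itself weakly adapted, since any weakly adapted map trivially approximates itself by weakly adapted maps. I would fix a regular value $y\in\SB^p$ of $f$ and set $M\coloneqq f^{-1}(y)$, a compact $\Cinfty$ submanifold of $X$ of dimension $n-p$ embedded with trivial normal bundle, whose $\Z/2$-homology class is Poincar\'e dual to $f^*(\sigma_p)\in A^p(X;\Z/2)$. By definition of $A^p(X;\Z/2)$, one can write $[M]=[Z_1]+\cdots+[Z_r]$ in $H_{n-p}(X;\Z/2)$, where each $Z_i$ is a nonsingular Zariski locally closed subset of $X$ of dimension $n-p$ with trivial normal bundle. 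The stable-range hypothesis $2p\geq n+1$ gives $2(n-p)<n$, which on the one hand lets the $Z_i$ be put in general position (pairwise disjoint) by small $\Cinfty$ isotopies, and on the other places $M$ and $\bigsqcup_i Z_i$ in the Whitney stable range for embeddings, so that these two homologous, trivially framed submanifolds become joined by a small ambient $\Cinfty$ isotopy. Combining such an isotopy with weak algebraic approximation applied to the $Z_i$'s produces a weak algebraic approximation of $M$; hence $f$ is weakly adapted and Theorem~\ref{th-1-6} delivers~\ref{th-1-9-a}.

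The main obstacle is this final consolidation: converting the homological identity $[M]=\sum[Z_i]$ into a geometric $\Cinfty$-small algebraic perturbation of $M$. The stable range is used twice — once to disjoin the $Z_i$ in general position, and once to isotope $M$ onto their disjoint union — and the interplay between algebraic rigidity of the $Z_i$'s and smooth flexibility of $M$ must be handled carefully through the weak algebraic approximation machinery invoked above. Without the stable range the decomposition remains purely algebraic-topological and does not yield the required geometric approximation.
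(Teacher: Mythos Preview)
Your treatment of \ref{th-1-9-a}$\Rightarrow$\ref{th-1-9-b}$\Rightarrow$\ref{th-1-9-c} and of \ref{th-1-9-c}$\Rightarrow$\ref{th-1-9-d} is fine and matches the paper's argument.

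The implication \ref{th-1-9-d}$\Rightarrow$\ref{th-1-9-a}, however, has a genuine gap. After writing $[M]=[Z_1]+\cdots+[Z_r]$ in $H_{n-p}(X;\Z/2)$ you assert that, in the stable range, $M$ and a disjoint union $\bigsqcup_i Z_i$ ``become joined by a small ambient $\Cinfty$ isotopy.'' This fails: the stable range (Haefliger--Whitney) gives uniqueness of embeddings of a \emph{fixed} manifold, but nothing in your hypotheses forces $M$ and $\bigsqcup_i Z_i$ to be diffeomorphic as abstract manifolds. Two $(n-p)$-submanifolds with trivial normal bundle and the same $\Z/2$-homology class can be of entirely different diffeomorphism types (e.g.\ a torus versus a union of spheres), and no isotopy will relate them. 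Equality in $H_{n-p}(X;\Z/2)$, or even framed cobordism, is much weaker than isotopy. A further issue is that after perturbing the $Z_i$ to be pairwise disjoint they are no longer algebraic, so one would need weak algebraic approximation for the \emph{union}, which is not what you have.

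The paper avoids this obstacle entirely by passing through unoriented bordism rather than homology. Condition~\ref{th-1-9-d} is used, via \cite[Lemma~2.3]{bib43}, to conclude that the unoriented bordism class of the inclusion $f^{-1}(y)\hookrightarrow X$ is \emph{algebraic} (represented by a regular map from a compact nonsingular real algebraic variety). One then invokes Benoist's theorem \cite[Theorem~2.7]{bib2}: in the range $2\dim M+1\le\dim X$, a compact submanifold whose bordism class is algebraic admits an (honest) algebraic approximation in $X$. Thus $f$ is weakly adapted and Theorem~\ref{th-1-6} applies. The point is that Benoist's theorem is precisely the deep input that converts the algebraic-topological hypothesis into a geometric approximation of $M$ itself; your outline attempts to reprove that step by hand and does not succeed.
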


It is natural to wonder whether the assumption $2p \geq \dim X + 1$ in
Theorem~\ref{th-1-9} is necessary.

The following example sheds light on some relationships between regular,
$k$-regulous, and $\Cinfty$ maps with values in unit spheres.

\begin{example}\label{ex-1-10}
Let $k$ be a nonnegative integer and let $\T^n = \SB^1 \times \cdots
\times \SB^1$ be the $n$-fold product of $\SB^1$. One readily checks that
$A^p(\T^n; \Z/2) = H^p(\T^n; \Z/2)$ for $0 \leq p \leq n$. Hence, in view
of Theorem~\ref{th-1-9}, if $2p \geq n+1$, then every $\Cinfty$ map
$\T^n \to \SB^p$ can be approximated by nice $k$-regulous maps in the
$\C^k$ topology. On the other hand, by \cite[Theorem~3.2]{bib6}, if
$n$~is a positive even integer, then every regular map $\T^n \to \SB^n$
is null homotopic (of course there are $\Cinfty$ maps $\T^n \to \SB^n$
that are not null homotopic and they do not admit approximation by
regular maps in the $\C^0$ topology). In particular, we cannot allow
$k=\infty$ in Theorems~\ref{th-1-6},~\ref{th-1-9} and Corollaries
\ref{cor-1-5}, \ref{cor-1-7}. Furthermore, according to
\cite[Theorem~2.8]{bib37}, if $n > p \geq 1$, then there exist a
nonsingular real algebraic variety~$X$ and a $\Cinfty$ map $f \colon X
\to \SB^p$ such that $X$~is diffeomorphic to~$\T^n$ and $f$~is not
homotopic to any regulous map.
\end{example}

There is ample evidence that the phenomenon exhibited in
Example~\ref{ex-1-10} is quite common: $k$-regulous maps, where $k$ is a
nonnegative integer, are more
flexible than regular maps. Approximation by regular maps is
investigated in \cite{bib2a,bib13} and numerous earlier papers \cite{bib5,
bib7,
bib8,bib9,bib10,bib11,
bib12, bib23,
bib24,
bib25, bib28,
bib29,
bib30,bib33,bib35,bib49,bib51,bib52}.

Theorems~\ref{th-1-2}, \ref{th-1-6} and~\ref{th-1-9} are proved in
Section~\ref{sec:4}. The methods employed in the proof of Theorem~\ref{th-1-2} are developed in Sections \ref{sec:2} and \ref{sec:3}. The inspiration for these methods originates from complex geometry, especially Gromov's article \cite{bib26} and the related works of
Forstneri\v{c} and others elaborated in \cite{bib22}. Of independent interest are Theorems \ref{th-3-6}, \ref{th-3-7}, \ref{th-4-2} and \ref{th-4-3}, which are refined versions of Theorem~\ref{th-1-2}. We derive Theorem~\ref{th-1-6} by combining Corollary~\ref{cor-4-4} with some results of \cite{bib34}. For the proof of Theorem~\ref{th-1-9} essential are Theorem~\ref{th-1-6}, \cite{bib43} and Benoist's paper \cite{bib2}. The results on maps into unit spheres announced above are significant improvements upon \cite{bib37, bib40, bib43}, which deal exclusively with
approximation by nice regulous maps in the $\C^0$ topology.

\section{Malleability and local malleability properties}\label{sec:2}

%First we introduce a new notion which enables us to treat regular maps and $k$-regulous maps in a unified way.

As in \cite{bib46}, by a \emph{stratification} of a real algebraic variety~$V$ we mean a finite collection $\VC$ of pairwise disjoint Zariski locally closed subsets whose union is~$V$. Each element of $\VC$ is called a \emph{stratum}; a stratum can be empty.% A stratification $\VC'$ of $V$ is called a \emph{refinement} of $\VC$ if each stratum of $\VC'$ is contained in a stratum of $\VC$. If $\VC_1, \ldots, \VC_r$ are stratifications of $V$, then the collection $\{S_1\cap \cdots \cap S_r : S_i \in V_i \text{ for } i=1,\ldots, r \}$ is a stratification of $V$ that is a refinement of $\VC_i$ for $i=1,\ldots,r$. The stratification $\{V\}$ of $V$, consisting of only one stratum, is said to be \emph{trivial}.

\begin{definition}\label{def-2-1}
Let $k$ be a nonnegative integer or $k=\infty$, $X$ and $Y$ nonsingular real algebraic varieties, $\XC$ a stratification of $X$, and $\YC$ a stratification of $Y$.

 A map $f \colon X \to Y$ is said to be \emph{$(k,\XC)$-regular} if it is of class $\C^k$ and for each stratum $S \in \XC$ the restriction $f|_S \colon S \to Y$ is a regular map. If, in addition, $f(S)$ is contained in a stratum $T \in \YC$, then $f$ is said to be \emph{$(k,\XC,\YC)$-regular}.

\end{definition}

We are now in a position to give an alternative description of $k$-regulous maps (see also \cite[Propositon~8]{bib32} and \cite[Th\'eor\`eme~4.1]{bib17}).

\begin{lemma}\label{lem-2-2}
Let $k$, $X$, $Y$, $\XC$, $\YC$ be as in Definition~\ref{def-2-1}.
\begin{iconditions}
\item\label{lem-2-2-i} If a map $f \colon X \to Y$ is $(k,\XC)$-regular, then it is $k$-regulous.

\item\label{lem-2-2-ii} If a map $f \colon X \to Y$ is $k$-regulous, then there exists a stratification $\XC'$ of $X$ such that $X \setminus P(f)$ is  a stratum of $\XC'$ and $f$ is $(k, \XC')$-regular ($P(f)$ is the Zariski closed subset of~$X$ defined in Section~\ref{sec:1}).

\item\label{lem-2-2-iii} If a map $f \colon X \to Y$ is $k$-regulous, then there exists a stratification $\XC''$ of $X$ such that $f$ is $(k, \XC'', \YC)$-regular.
\end{iconditions}
\end{lemma}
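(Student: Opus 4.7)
\emph{Plan.} The three parts can be handled in sequence, with part \ref{lem-2-2-iii} building on the stratification produced in part \ref{lem-2-2-ii}. For \ref{lem-2-2-i}, it suffices to exhibit a Zariski open dense subset $U \subseteq X$ on which $f$ is regular. Take $U$ to be the union of those strata $S \in \XC$ whose Zariski closure is an irreducible component of $X$; writing $S = V \cap W$ with $V$ Zariski open and $W$ Zariski closed in $X$, the containment $\overline{S} \subseteq W$ forces $W$ to contain the whole component $\overline{S}$, so $S$ is Zariski open in that component. Since the remaining strata have strictly smaller dimension within each component, $U$ is Zariski open and dense. Regularity of $f|_U$ then follows from the local nature of regularity and the assumed regularity of each $f|_S$, and combined with $f \in \C^k(X,Y)$ this yields $k$-regulousness.

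For \ref{lem-2-2-ii}, the plan is a Noetherian induction on $\dim X$. Designate $S_0 := X \setminus P(f)$ as the top stratum: by definition of $P(f)$ this is Zariski open in $X$ and $f|_{S_0}$ is regular. To stratify the Zariski closed set $P(f)$, the crucial input is that the restriction of a $k$-regulous map to a nonsingular Zariski locally closed subvariety remains $k$-regulous; this is the substantive content of \cite[Proposition~8]{bib32} and \cite[Théorème~4.1]{bib17}. Granting it, the nonsingular locus $P(f)_{\mathrm{ns}}$ is Zariski open and dense in $P(f)$, the map $f|_{P(f)_{\mathrm{ns}}}$ is $k$-regulous, and therefore there exists a Zariski open dense $S_1 \subseteq P(f)_{\mathrm{ns}}$ on which $f$ is regular. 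The leftover Zariski closed set $(P(f) \setminus P(f)_{\mathrm{ns}}) \cup (P(f)_{\mathrm{ns}} \setminus S_1)$ has dimension strictly less than $\dim P(f)$, so the induction terminates and assembles the strata into the required $\XC'$. The main obstacle lies precisely in this restriction step: although $f|_{P(f)_{\mathrm{ns}}}$ is automatically $\C^k$, establishing its algebraic regularity on a dense Zariski open subset requires controlling a rational representative of $f$ at generic points of $P(f)_{\mathrm{ns}}$, which is where the nontrivial algebraic geometry enters.

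For \ref{lem-2-2-iii}, the idea is to refine $\XC'$ using the target stratification. For each $S \in \XC'$ and each $T \in \YC$, set $S_T := S \cap f^{-1}(T) = (f|_S)^{-1}(T)$; since $f|_S \colon S \to Y$ is regular and $T$ is Zariski locally closed in $Y$, $S_T$ is Zariski locally closed in $S$, and hence in $X$. The collection $\XC'' := \{S_T : S \in \XC',\, T \in \YC\}$ consists of pairwise disjoint Zariski locally closed subsets whose union is $X$, so it is a stratification. By construction $f(S_T) \subseteq T$, and $f|_{S_T}$ is regular as a restriction of $f|_S$; together with $f \in \C^k(X,Y)$ this shows that $f$ is $(k, \XC'', \YC)$-regular.
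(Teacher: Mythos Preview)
Your argument follows the paper's approach closely: the paper calls \ref{lem-2-2-i} ``straightforward,'' derives \ref{lem-2-2-ii} directly from \cite[Proposition~8 and p.~91]{bib32} (after observing that the general target $Y$ reduces to $Y=\R$ by embedding $Y\subset\R^p$ and working coordinatewise), and proves \ref{lem-2-2-iii} exactly as you do, via $\XC''=\{(f|_S)^{-1}(T):S\in\XC',\ T\in\YC\}$. Your Noetherian-induction outline for \ref{lem-2-2-ii} is precisely the mechanism behind the cited result in~\cite{bib32}.

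There is one small slip in your treatment of \ref{lem-2-2-i}. Your set $U$ can be empty when $X$ is reducible: if $X=X_1\sqcup X_2$ has two (necessarily disjoint, since $X$ is nonsingular) irreducible components and $\XC=\{X\}$, the sole stratum has Zariski closure $X_1\cup X_2$, which is not a single irreducible component, so your criterion selects nothing. The easy repair: for each component $X_i$, irreducibility forces some stratum $S$ to satisfy $\overline{S\cap X_i}=X_i$, and then $S\cap X_i$, being locally closed and dense, is Zariski open in $X_i$; take $U$ to be the union of these pieces over all~$i$. This $U$ is Zariski open and dense in $X$, and $f|_U$ is regular on each piece as a restriction of the regular map $f|_S$.
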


\begin{proof}
The proof of \ref{lem-2-2-i} is straightforward, and \ref{lem-2-2-ii} follows from \cite[Proposition~8 and p.~91]{bib32} (\!\!\cite{bib32} deals with $Y=\R$, but the general case follows at once because $Y$ can be regarded as a subvariety of $\R^p$, for some $p$). To prove \ref{lem-2-2-iii}, we choose a stratification $\XC'$ as in \ref{lem-2-2-ii}, and define
\begin{equation*}
    \XC'' \coloneqq \{(f|_S)^{-1}(T) : S \in \XC' \text{ and } T \in \YC\}.\qedhere
\end{equation*}
\end{proof}

For the sake of clarity, we record the following (see also \cite[Corollaire~4.14]{bib17}).

\begin{corollary}\label{cor-2-3}
Let $X$, $Y$, $Z$ be nonsingular real algebraic varieties, and $k$ a nonnegative integer or $k=\infty$. Assume that $f \colon X \to Y$ and $g \colon Y \to Z$ are $k$-regulous maps. Then the composite map $g \circ f$ is also $k$-regulous.
\end{corollary}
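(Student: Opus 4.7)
The plan is to reduce the corollary to Lemma~\ref{lem-2-2} by exhibiting a single stratification of $X$ with respect to which $g \circ f$ is $(k, \XC)$-regular. Since $g \circ f$ is automatically of class $\C^k$ as a composition of $\C^k$ maps between $\Cinfty$ manifolds, by Lemma~\ref{lem-2-2}\ref{lem-2-2-i} it will suffice to find a stratification $\XC$ of $X$ such that $(g \circ f)|_S \colon S \to Z$ is a regular map for every $S \in \XC$.

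First, I would apply Lemma~\ref{lem-2-2}\ref{lem-2-2-ii} to $g$: this yields a stratification $\YC$ of $Y$ (which we can take so that $Y \setminus P(g)$ is one of its strata) such that $g$ is $(k, \YC)$-regular, i.e.\ the restriction $g|_T \colon T \to Z$ is regular for every stratum $T \in \YC$. Next, I would feed this particular $\YC$ into Lemma~\ref{lem-2-2}\ref{lem-2-2-iii} applied to $f$: this produces a stratification $\XC$ of $X$ such that $f$ is $(k, \XC, \YC)$-regular. Thus for each $S \in \XC$, the restriction $f|_S \colon S \to Y$ is regular and $f(S)$ is contained in some stratum $T \in \YC$.

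Fix $S \in \XC$ and the corresponding stratum $T \in \YC$ with $f(S) \subseteq T$. Then
\begin{equation*}
(g \circ f)|_S = g|_T \circ f|_S \colon S \to T \to Z,
\end{equation*}
which is a composition of regular maps and hence regular (composition of morphisms of real algebraic varieties is a morphism). This holds for every $S \in \XC$, so $g \circ f$ is $(k, \XC)$-regular. Applying Lemma~\ref{lem-2-2}\ref{lem-2-2-i} finishes the proof.

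There is no real obstacle here: all the technical content is packaged in Lemma~\ref{lem-2-2}. The only point worth being careful about is ensuring that the stratifications are chosen in the correct order — first $\YC$ on the target side of $f$ (coming from $g$), then $\XC$ on the source side adapted to $\YC$ via $f$ — so that each stratum $S$ of $\XC$ is mapped by $f$ into a single stratum on which $g$ is already regular.
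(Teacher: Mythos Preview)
Your proof is correct and follows exactly the same approach as the paper: apply Lemma~\ref{lem-2-2}\ref{lem-2-2-ii} to $g$ to get $\YC$, then Lemma~\ref{lem-2-2}\ref{lem-2-2-iii} to $f$ to get $\XC$ compatible with $\YC$, and conclude via Lemma~\ref{lem-2-2}\ref{lem-2-2-i}. The only difference is that you spell out in detail the ``Consequently, $g \circ f$ is $(k,\XC)$-regular'' step that the paper leaves implicit.
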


\begin{proof}
By Lemma~\ref{lem-2-2}\ref{lem-2-2-ii}, there exists a stratification $\YC$ of $Y$ such that the map $g$ is $(k, \YC)$-regular. In view of Lemma~\ref{lem-2-2}\ref{lem-2-2-iii}, we can choose a stratification $\XC$ of $X$ such that the map $f$ is $(k, \XC, \YC)$-regular. Consequently, the map $g \circ f$ is $(k, \XC)$-regular, so it is $k$-regulous by Lemma~\ref{lem-2-2}\ref{lem-2-2-i}.
\end{proof}

In what follows we work with vector bundles, which are always $\R$-vector bundles. Let $Y$ be a real algebraic variety. Given a vector bundle $p \colon E \to Y$ over $Y$, with total space~$E$ and bundle projection $p$, we sometimes refer to $E$ as a vector bundle over $Y$. If $y$ is a point in~$Y$, we let $E_y \coloneqq p^{-1}(y)$ denote the fiber of $E$ over $y$ and write $0_y$ for the zero vector in~$E_y$. We call the set $Z(E) \coloneqq \{0_y \in E : y \in Y\}$ the zero section of $E$.

For the general theory of algebraic vector bundles over real algebraic varieties we refer the reader to \cite[Section~12.1]{bib4}. For each algebraic vector bundle $E$ over $Y$ there exist a nonnegative integer $n$  and a surjective algebraic morphism from the product vector bundle $Y \times \R^n$ onto $E$ \cite[Theorem~12.1.7]{bib4}.

Assuming that $Y$~is a nonsingular real algebraic variety, we write $TY$
for the tangent bundle to~$Y$, and $T_yY$ for the tangent space to~$Y$
at~$y \in Y$.

The following notions will be crucial in the proofs of all our main theorems.

\begin{definition}\label{def-2-4}
Let $Y$ be a nonsingular real algebraic variety, and $k$ a~positive integer or $k=\infty$.
\begin{iconditions}
\item\label{def-2-4-i} A \emph{$k$-regulous spray} for $Y$ is a triple
$(E, p, s)$, where $p \colon E \to Y$ is an algebraic vector bundle
over~$Y$ and $s \colon E \to Y$ is a $k$-regulous map 
such that $s(0_y) = y$ for all $y \in Y$.

\item\label{def-2-4-ii} A $k$-regulous spray $(E, p, s)$ for $Y$ is said
to be \emph{dominating} if the derivative 
%---display for a better break
\begin{equation*}
{d_{0_y}s \colon T_{0_y}E \to T_yY}
\end{equation*}
maps the subspace $E_y = T_{0_y}E_y$ of $T_{0_y}E$ onto $T_yY$, that is,
\begin{equation*}
d_{0_y}s(E_y) = T_y Y \quad \text{ for all } y \in Y.
\end{equation*}

\item\label{def-2-4-iii} The variety $Y$ is called \emph{$k$-malleable} if
it admits a dominating $k$-regulous spray.
\end{iconditions}
For simplicity, $\infty$-regulous sprays, dominating $\infty$-regulous sprays and $\infty$-malleable varieties are called \emph{sprays}, \emph{dominating sprays} and \emph{malleable varieties}, respectively.
\end{definition}

Since $\infty$-regulous maps are regular, it follows that the concepts of spray, dominating spray, and malleable variety in Definition~\ref{def-2-4} above are identical with those in \cite[Definition~2.1]{bib13}.

\begin{lemma}\label{lem-2-5}
Let $Y$ be a nonsingular real algebraic variety, and $k$ a positive integer or $k=\infty$. If the variety $Y$ is $k$-malleable, then it admits a dominating $k$-regulous spray $(E,p,s)$ such that $p \colon E = Y \times \R^n \to Y$ is the product vector bundle.
\end{lemma}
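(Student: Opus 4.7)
The plan is to start from an arbitrary dominating $k$-regulous spray and reduce its underlying vector bundle to a trivial one by precomposing with a surjective algebraic bundle morphism from a product bundle. By $k$-malleability of $Y$, fix a dominating $k$-regulous spray $(E', p', s')$. By \cite[Theorem~12.1.7]{bib4}, already recalled above, there exist a nonnegative integer~$n$ and a surjective algebraic morphism of vector bundles $\psi \colon Y \times \R^n \to E'$. Set $p \colon Y \times \R^n \to Y$ to be the canonical projection and define
\begin{equation*}
s \colon Y \times \R^n \to Y, \qquad s(y,v) \coloneqq s'(\psi(y,v)).
\end{equation*}

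The first routine step is to check that $(Y \times \R^n, p, s)$ is a $k$-regulous spray. Since $\psi$ is an algebraic morphism, it is regular, hence $\infty$-regulous. Composition of $\psi$ with the $k$-regulous map $s'$ is $k$-regulous by Corollary~\ref{cor-2-3}. Moreover, because $\psi$ is a vector bundle morphism, it sends the zero section of the product bundle to the zero section of~$E'$, so $\psi(y, 0) = 0_y$, and therefore $s(y, 0) = s'(0_y) = y$ for every $y \in Y$. This verifies the spray conditions of Definition~\ref{def-2-4}\ref{def-2-4-i}.

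The second step is to verify that the new spray inherits the dominating property. Writing the fiber of the product bundle over~$y$ as $(Y \times \R^n)_y = \{y\} \times \R^n$, one has the natural identification $T_{0_y}(Y \times \R^n)_y = \R^n$. Because $\psi$ is a bundle morphism, its derivative at~$0_y$ preserves the vertical decomposition and restricts on the fiber to the linear map $\psi_y \coloneqq \psi|_{\{y\} \times \R^n} \colon \R^n \to E'_y$, which is surjective since $\psi$ is a surjective morphism of vector bundles. Using $s = s' \circ \psi$ and the chain rule,
\begin{equation*}
d_{0_y} s\bigl((Y \times \R^n)_y\bigr) = d_{0_y} s'\bigl(\psi_y(\R^n)\bigr) = d_{0_y} s'(E'_y) = T_y Y,
\end{equation*}
where the last equality is the assumed domination of $(E', p', s')$. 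This confirms Definition~\ref{def-2-4}\ref{def-2-4-ii} and completes the proof.

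There is no real obstacle here beyond bookkeeping: everything is packaged once one invokes the BCR trivialization-by-surjection theorem together with the regularity of bundle morphisms and Corollary~\ref{cor-2-3}. The only point to be slightly careful about is that ``surjective algebraic morphism'' in Theorem~12.1.7 of~\cite{bib4} is meant in the vector bundle sense, so that the fiberwise linear maps $\psi_y$ really are surjective; this is exactly what is needed for the domination estimate to transfer from $s'$ to $s$.
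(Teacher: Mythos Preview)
Your proof is correct and follows essentially the same approach as the paper: start from an arbitrary dominating $k$-regulous spray, invoke \cite[Theorem~12.1.7]{bib4} to obtain a surjective algebraic morphism from a product bundle, and precompose using Corollary~\ref{cor-2-3}. The paper's version is terser and simply asserts that the resulting triple is a dominating $k$-regulous spray, whereas you spell out the verification of the spray and domination conditions, but the underlying argument is identical.
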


\begin{proof}
Let $(\tilde E, \tilde p, \tilde s)$ be a dominating $k$-regulous spray for $Y$. Choose a nonnegative integer~$n$ and a surjective algebraic morphism $\alpha \colon E \to \tilde E$ from the product vector bundle $p \colon E = Y \times \R^n \to Y$ onto $\tilde p \colon \tilde E \to Y$. By Corollary~\ref{cor-2-3}, the map $s \colon E \to Y$, $s(y,v) = \tilde s (\alpha(y,v))$ is $k$-regulous, so $(E,p,s)$ is a dominating $k$-regulous spray for $Y$.
\end{proof}

We now recall important examples of malleable real algebraic varieties.

\begin{example}\label{ex-2-6}
Let $G$ be a linear real algebraic group, that is, a Zariski closed subgroup of the general linear group $\GL_n(\R)$, for some $n$. A \emph{$G$-space} is a real algebraic variety $Y$ on which $G$ acts, the action $G \times Y \to Y$, $(a,y) \mapsto a \cdot y$ being a regular map. We say that a $G$-space $Y$ is \emph{good} if $Y$ is nonsingular and for every point $y \in Y$ the derivative of the map $G \to Y$, $a \mapsto a \cdot y$ at the identity element of $G$ is surjective. Clearly, if $Y$ is \emph{homogeneous} for~$G$ (that is, $G$ acts transitively on $Y$), then $Y$ is a good $G$-space. By \cite[Proposition~2.8]{bib13}, each good $G$-space is malleable.

In particular, the unit $n$-sphere $\SB^n$ and real projective $n$-space $\PB^n(\R)$ are malleable varieties, being homogeneous spaces for the orthogonal group $\ON(n+1) \subset\GL_{n+1}(\R)$.
\end{example}

It is also convenient to introduce the following definition.

\begin{definition}\label{def-2-7}
Let $Y$ be a nonsingular real algebraic variety.
\begin{iconditions}
\item\label{def-2-7-i} A \emph{local spray} for $Y$ is a regular map $\sigma \colon U \times \R^n \to Y$, where $U$ is a Zariski open subset of $Y$ and $n$ is a nonnegative integer, such that $\sigma(y,0) = y$ for all $y \in U$.

\item\label{def-2-7-ii} A local spray $\sigma \colon U \times \R^n \to Y$ for $Y$ is said to be \emph{dominating} if for every point $y \in U$ the derivative of the map $\sigma(y, \cdot) \colon \R^n \to Y$, $v \mapsto \sigma(y,v)$ at $0 \in \R^n$ is surjective.

\item\label{def-2-7-iii} The variety $Y$ is called \emph{locally malleable} if for every point $p \in Y$ there exists a dominating local spray $\sigma \colon U \times \R^n \to Y$ for $Y$ with $p \in U$.
\end{iconditions}
\end{definition}

It follows directly from Definitions \ref{def-2-4} and \ref{def-2-7} that each malleable real algebraic variety is locally malleable. It is plausible that the converse also holds, but we can only prove the following weaker result.

\begin{proposition}\label{prop-2-8}
Let $Y$ be a locally malleable nonsingular real algebraic variety. Then, for each positive integer $k$, the variety $Y$ is $k$-malleable.
\end{proposition}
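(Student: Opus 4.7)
The plan is to construct a single global dominating $k$-regulous spray for $Y$ by assembling finitely many local ones. By local malleability, for each $y \in Y$ there is a dominating local spray $\sigma_y \colon U_y \times \R^{n_y} \to Y$ with $y \in U_y$. Since the Zariski topology on $Y$ is Noetherian, I extract a finite subcover $U_1, \ldots, U_m$ of $Y$ with corresponding dominating local sprays $\sigma_i \colon U_i \times \R^{n_i} \to Y$.

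The crucial technical step is to globalize each $\sigma_i$ to a regular (hence $k$-regulous) map $\tau_i \colon Y \times \R^{n_i} \to Y$ satisfying $\tau_i(y, 0) = y$ for every $y \in Y$, and $\tau_i(y, v) = y$ for every $y$ outside $U_i$. Because $Y$ is affine, the Zariski closed complement $W_i \coloneqq Y \setminus U_i$ can be written as $\{h_i = 0\}$ for a single regular function $h_i \colon Y \to \R$ (take a sum of squares of generators of the ideal $I(W_i)$), and consequently $\mathcal{O}(U_i) \cong \mathcal{O}(Y)[1/h_i]$. Expanding $\sigma_i$ as a polynomial in $v$ with coefficients in $\mathcal{O}(U_i)$,
\begin{equation*}
\sigma_i(y, v) = y + \sum_{\alpha \neq 0} \frac{g_\alpha(y)}{h_i(y)^{e_\alpha}} \, v^\alpha,
\end{equation*}
where the sum is finite and $g_\alpha \in \mathcal{O}(Y)$, I choose $M_i$ larger than every $e_\alpha$ and substitute $v \mapsto h_i(y)^{M_i} v$. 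Each coefficient is converted into $g_\alpha(y) h_i(y)^{M_i |\alpha| - e_\alpha}$, a genuinely regular function on $Y$ that vanishes on $W_i$. This yields a regular map $\tau_i \colon Y \times \R^{n_i} \to Y$ that agrees with $\sigma_i(y, h_i(y)^{M_i} v)$ on $U_i \times \R^{n_i}$ and restricts to $(y, v) \mapsto y$ on $W_i \times \R^{n_i}$. Differentiating at $v = 0$ gives $d_0 \tau_i(y, \cdot) = h_i(y)^{M_i} d_0 \sigma_i(y, \cdot)$ for each $y \in U_i$; since $h_i(y) \neq 0$ there and $\sigma_i$ is dominating, this derivative is surjective onto $T_y Y$.

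With these $\tau_i$ in hand, I form a single spray by iterated composition:
\begin{equation*}
\tau(y, v_1, \ldots, v_m) \coloneqq \tau_m(\tau_{m-1}(\cdots \tau_1(y, v_1) \cdots, v_{m-1}), v_m).
\end{equation*}
This map is $k$-regulous by Corollary~\ref{cor-2-3}, and evidently $\tau(y, 0, \ldots, 0) = y$. Because every $\tau_j$ fixes its input when its $v_j$-slot is zero, a chain-rule calculation at the base point $(y, 0, \ldots, 0)$ shows that the partial derivative of $\tau$ with respect to $v_j$ equals $d_0 \tau_j(y, \cdot)$. For any $y \in Y$ one can pick $j$ with $y \in U_j$, and then $d_0 \tau_j(y, \cdot)$ is already surjective onto $T_y Y$; hence $\tau$ is a dominating $k$-regulous spray on the trivial vector bundle $Y \times \R^{n_1 + \cdots + n_m} \to Y$, proving that $Y$ is $k$-malleable.

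The principal obstacle is the globalization step, where a local regular spray on a Zariski open set has to be extended to a regular spray defined on all of $Y$. This hinges on the affine-algebraic identity $\mathcal{O}(U_i) = \mathcal{O}(Y)[1/h_i]$ and on choosing $M_i$ large enough to clear every denominator in the polynomial expansion of $\sigma_i$; once this is executed properly, no $\C^k$-bump functions are needed, and the remaining Noetherian cover and chain-rule arguments are routine.
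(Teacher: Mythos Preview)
There is a genuine gap in the globalization step. In the real algebraic geometry framework used here (regular functions in the sense of \cite{bib4}), a regular function on $U_i \times \R^{n_i}$ need \emph{not} be polynomial in~$v$: for instance $1/(1+\norm{v}^2)$ is a bona fide regular function on $\R^{n_i}$, and in general any denominator $q(y,v)$ that is nowhere zero on $U_i\times\R^{n_i}$ may occur. Hence your finite expansion $\sigma_i(y,v)=y+\sum_{\alpha\neq 0}\frac{g_\alpha(y)}{h_i(y)^{e_\alpha}}v^\alpha$ is not available, and the substitution $v\mapsto h_i(y)^{M_i}v$ does not, in general, clear the denominators to produce a regular map on all of $Y\times\R^{n_i}$. (The auxiliary identification $\mathcal{O}(U_i)\cong\mathcal{O}(Y)[1/h_i]$ already fails in this setting for the same reason: on $\R^2$ with $h=x$, the function $1/(x^2+y^2)$ is regular on $\{x\neq 0\}$ but is not of the form $a/(b\,x^N)$ with $b$ nowhere zero on $\R^2$.)

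A telling symptom: your $\tau_i$ are claimed to be \emph{regular}, so your argument---were it valid---would prove that every locally malleable variety is malleable, which the paper explicitly records as an open question just before the proposition. The paper's route (Lemma~\ref{lem-2-9}) does not try to clear denominators algebraically. Instead it writes $\sigma_i(y,v)=y+\sum_j\tau_{ij}(y,v)v_j$ with $\tau_{ij}$ regular on $U_i\times\R^{n_i}$ (still possibly nonpolynomial in $v$), and uses a \L{}ojasiewicz-type estimate to choose $r$ so large that $\varphi_i(y)^r\tau_{ij}(y,v)$ extends by zero to a function of class $\C^k$ across $(Y\setminus U_i)\times\R^{n_i}$. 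The resulting globalized spray $\sigma_i^{(r)}$ is then only $k$-regulous, not regular, which suffices for the proposition. Your finite-cover extraction and iterated-composition steps are correct and match the paper once this repair is in place.
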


\begin{proof}
Since the variety~$Y$ is quasi-compact in the Zariski topology, there exists a finite collection
%
%\begin{equation*}
    $\{\sigma_i \colon U_i \times \R^{n_i} \to Y : i=1,\ldots,q\}$ 
%\end{equation*}
%
of dominating local sprays for $Y$ such that the Zariski open sets $U_i$ are nonempty and cover~$Y$. Choose a regular function $\varphi_i \colon Y \to \R$  with
$\varphi_i^{-1}(0) = Y \setminus U_i$ for $i=1,\ldots,q$.% Now choose a stratification $\YC$ of $Y$ that is a common refinement of the stratifications $\YC_i \coloneqq \{U_i, Y\setminus U_i\}$ of $Y$ for $i=1,\ldots,q$.

Let $k$ be a positive integer. By Lemma~\ref{lem-2-9} below, there exists a
positive integer $r$ such that for 
$i=1,\ldots,q$ the map $\sigma_i^{(r)} \colon Y
\times \R^{n_i} \to Y$ defined by
\begin{equation*}
    \sigma_i^{(r)}(y,v_i) =
    \begin{cases}
    \sigma_i(y, \varphi_i(y)^r v_i) & \text{for } (y,v_i) \in U_i \times \R^{n_i}\\
    y & \text{for } (y,v_i) \in (Y \setminus U_i) \times \R^{n_i}
    \end{cases}
\end{equation*}
is $k$-regulous. Moreover, by construction, for every point $y \in U_i$ the derivative of the map $\sigma_i^{(r)}(y,\cdot) \colon \R^{n_i} \to Y$, $v_i \mapsto \sigma_i^{(r)}(y,v_i)$ at $0 \in \R^{n_i}$ is surjective. For $i=1,\ldots,q$, we define recursively a map $s_i \colon Y
\times \R^{n_1} \times \cdots \times \R^{n_i} \to Y$ by
\begin{align*}
&s_i = \sigma_1^{(r)} & &\text{for } i=1,\\
&s_i(y, v_1, \ldots, v_{i-1}, v_i) = \sigma_i^{(r)} (s_{i-1}(y, v_1,
\ldots, v_{i-1}), v_i) & &\text{for } i \geq 2.
\end{align*}
By Corollary~\ref{cor-2-3}, the maps $s_i$ are $k$-regulous. We obtain a dominating $k$-regulous spray $(E, p, s)$ for $Y$, where
\begin{equation*}
p \colon E = Y \times \R^{n_1} \times \cdots \times \R^{n_q} \to Y
\end{equation*}
is the product vector bundle and $s = s_q$. Thus the variety~$Y$ is $k$-malleable.
\end{proof}

In the proof of Proposition~\ref{prop-2-8} we invoked the following.

\begin{lemma}\label{lem-2-9}
Let $Y$ be a nonsingular real algebraic variety, $U$ a nonempty Zariski open subset of $Y$, and $\tau \colon U \times \R^n \to Y$ a regular map satisfying $\tau(y,0)=y$ for all $y\in U$. Let $\varphi \colon Y \to \R$ be a regular function with $\varphi^{-1}(0) =
Y \setminus U$. Then, for each nonnegative integer $k$, there exists a positive integer $r(k)$ such that
for every integer $r \geq r(k)$ the map
$\tau^{(r)} \colon Y \times \R^n \to Y$ defined by
\begin{equation*}
\tau^{(r)}(y,w) = 
\begin{cases}
\tau(y, \varphi(y)^r w) &\text{for } (y,w) \in U \times \R^n\\
y &\text{for } (y,w) \in (Y \setminus U) \times \R^n
\end{cases}
\end{equation*}
is $k$-regulous.
\end{lemma}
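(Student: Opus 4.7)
The plan is to show that, for $r$ large enough, $\tau^{(r)}$ is of class $\C^k$ on $Y \times \R^n$. Since $\tau^{(r)}$ is regular on each stratum of $\XC \coloneqq \{U \times \R^n, (Y \setminus U) \times \R^n\}$ (a composition of regular maps on the first, the projection $(y,w) \mapsto y$ on the second), it will then be $(k,\XC)$-regular and therefore $k$-regulous by Lemma~\ref{lem-2-2}\ref{lem-2-2-i}.

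The key analytic step is to factor a high power of $\varphi$ out of $\tau^{(r)}(y,w) - y$ near $Y \setminus U$. I would embed $Y$ as an algebraic subset of $\R^N$, denote by $\tau_1,\ldots,\tau_N$ the components of $\tau$, and set $g_i(y,v) \coloneqq \tau_i(y,v) - y_i$, a regular function on $U \times \R^n$ vanishing on $U \times \{0\}$. Since $U = Y \setminus \varphi^{-1}(0)$ is a principal Zariski open subset of $Y$, one may write $g_i = F_i/\varphi^{m_i}$ with $F_i \in \R[Y \times \R^n]$ and $m_i \geq 0$. After reducing to those irreducible components of $Y$ on which $\varphi \not\equiv 0$ (on the remaining components $\tau^{(r)}$ is just the projection, so nothing has to be proved), the set $U$ is Zariski dense in $Y$, so the vanishing of $g_i$ on $U \times \{0\}$ forces $F_i(y,0) = 0$ on all of $Y$. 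Polynomial division in $v_1,\ldots,v_n$ then produces $F_i(y,v) = \sum_{j=1}^n v_j F_{ij}(y,v)$, and setting $h_{ij} \coloneqq F_{ij}/\varphi^{m_i}$ and substituting $v = \varphi(y)^r w$ yields the identity
\[
\tau_i^{(r)}(y,w) - y_i = \varphi(y)^r \sum_{j=1}^n w_j\, h_{ij}\bigl(y, \varphi(y)^r w\bigr)
\]
on $U \times \R^n$, which isolates the decisive prefactor $\varphi(y)^r$.

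For any compact set $K \subset Y \times \R^n$, the explicit form $h_{ij} = F_{ij}/\varphi^{m_i}$ yields $|D^\alpha h_{ij}(y,v)| \leq C_K |\varphi(y)|^{-(m_i + |\alpha|)}$ on $K \cap (U \times \R^n)$ for every multi-index $\alpha$ with $|\alpha| \leq k$: each $y$-derivative raises the pole order in $\varphi$ by at most one, while $v$-derivatives leave it unchanged. Applying the Leibniz rule to $\varphi(y)^r w_j h_{ij}(y, \varphi(y)^r w)$ and the chain rule inside $h_{ij}$, every partial derivative of $\tau_i^{(r)} - y_i$ of order at most $k$ is bounded on $K \cap (U \times \R^n)$ by a constant times $|\varphi(y)|^{r - m - k}$, where $m \coloneqq \max_i m_i$. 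Choosing $r(k) > m + k$ guarantees that for every $r \geq r(k)$ all these derivatives tend to zero as $\varphi(y) \to 0$, so $\tau^{(r)}$ extends to a $\C^k$ map on $Y \times \R^n$ that equals $(y,w) \mapsto y$ on $(Y \setminus U) \times \R^n$, and the stratification argument above finishes the proof. The main obstacle is the final bookkeeping: one must check that no term produced by the combined Leibniz and chain rule calculation on $\varphi^r w_j h_{ij}(y,\varphi^r w)$ introduces a pole in $\varphi$ of order exceeding $m + k$, for which the uniform derivative estimate on $h_{ij}$ is indispensable.
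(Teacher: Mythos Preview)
Your overall strategy---factor $\tau_i(y,w)-y_i=\sum_j w_j\,\tau_{ij}(y,w)$ and then argue that the prefactor $\varphi^r$ kills the singularities of the $\tau_{ij}$ along $Y\setminus U$---is exactly the one the paper follows, and the final stratification argument is fine. The gap is the sentence ``one may write $g_i=F_i/\varphi^{m_i}$ with $F_i\in\R[Y\times\R^n]$.'' In the real algebraic setting adopted here (regular functions in the sense of \cite[Chapter~3]{bib4}) this is \emph{false}: regular functions on a principal real open set $D(\varphi)$ need not lie in the localisation at~$\varphi$, because the real Nullstellensatz does not let you conclude $\varphi^N\in(q)$ from a mere inclusion of real zero loci $\{q=0\}\subset\{\varphi=0\}$. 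Concretely, take $Y=\R^2$, $\varphi=y_1^2+y_2^2$, $U=\R^2\setminus\{0\}$, $n=1$, and $\tau(y,w)=\bigl(y_1+w/(y_1^2+y_2^4),\,y_2\bigr)$. Then $g_1=w/(y_1^2+y_2^4)$ is regular on $U\times\R$, yet $\varphi^m g_1$ is not regular (not even in the sense of \cite{bib4}) on $\R^2\times\R$ for any~$m$: the irreducible polynomial $y_1^2+y_2^4$ is coprime to $y_1^2+y_2^2$, so any global representation $p/q$ with $q$ nowhere zero would force $(y_1^2+y_2^4)\mid q$, contradicting $q(0,0,w)\neq0$. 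Your derivative bound $|D^\alpha h_{ij}|\leq C_K\,|\varphi|^{-(m_i+|\alpha|)}$ rests entirely on that explicit form and is therefore unjustified.

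What is true---and what the paper invokes via \cite[Proposition~3.4]{bib41a} (or \cite[Lemma~5.2]{bib17} when $Y=\R^m$)---is that for an \emph{arbitrary} regular function $\tau_{ij}$ on $U\times\R^n$ the product $\varphi^r\tau_{ij}$ extends by zero to a $\C^k$ function on $Y\times\R^n$ once $r$ is large enough. The proof of that fact is precisely where the missing ingredient enters: a \L{}ojasiewicz-type inequality bounding the polynomial denominator of $\tau_{ij}$ from below by a power of $|\varphi|$ on compacta. If you replace your unsupported representation $F_{ij}/\varphi^{m_i}$ by such a \L{}ojasiewicz estimate, your Leibniz/chain-rule bookkeeping can be made to go through; as written, however, the argument fails because real regular functions do not localise the way scheme-theoretic ones do.
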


\begin{proof}
We may assume that $Y$~is an algebraic subset of~$\R^m$. Then
\begin{equation*}
\tau(y,w) = (\tau_1(y,w), \ldots, \tau_m(y,w)) \quad \text{for all }
(y,w) \in U \times \R^n,
\end{equation*}
where the $\tau_i \colon U \times \R^n \to \R$ are regular functions for $i=1,\ldots,m$. Note that
\begin{equation*}
\tau_i(y,0) = y_i \quad \text{for all } y=(y_1, \ldots, y_m) \in U.
\end{equation*}
By \cite[Proposition~3.2.3]{bib4}, there exist polynomial functions $p_i, q_i \colon
\R^m \times \R^n \to \R$ such that
\begin{equation*}
q_i^{-1}(0) \cap (U \times \R^n) = \varnothing \quad \text{and} \quad
\tau_i(y,w) = \frac{p_i(y,w)}{q_i(y,w)} \quad \text{for all } (y,w) \in U \times \R^n.
\end{equation*}
We get
\begin{equation*}
\tau_i(y,w) - \tau_i(y,0) = \frac{p_i(y,w) q_i(y,0) - p_i(y,0)
q_i(y,w)}{q_i(y,w) q_i(y,0)}
\end{equation*}
and hence
\begin{equation*}
\tau_i(y,w) = y_i + \sum_{j=1}^n \tau_{ij}(y,w)w_j,
\end{equation*}
where the $\tau_{ij} \colon U \times \R^n \to \R$ are regular functions and
$w = (w_1, \ldots, w_n)$.

Let $k$ be a nonnegative integer and let $\pi \colon Y \times \R^n \to Y$ be the canonical projection. Since
$(\varphi \circ \pi)^{-1}(0) = (Y \times \R^n) \setminus (U \times \R^n)$,
there exists a positive integer~$r(k)$ such that for each integer $r
\geq r(k)$ the functions $\tau_{ij}^{(r)} \colon Y \times \R^n \to \R$
defined by
\begin{equation*}
\tau_{ij}^{(r)}(y,w) =
\begin{cases}
\tau_{ij}(y,w)\varphi(y)^r & \text{for } (y,w) \in U \times \R^n\\
0 & \text{for } (y,w) \in (Y \times \R^n) \setminus (U \times \R^n)
\end{cases}
\end{equation*}
are of class $\C^k$ for $i=1,\ldots,m$, $j=1,\ldots,n$ (see
\cite[Proposition~3.4]{bib41a}, and also \cite[Lemma~5.2]{bib17} for
$Y=\R^m$). Now we define a map $\tau^{(r)} \colon Y \times \R^n \to \R^m$ by
\begin{equation*}
\tau^{(r)}(y,w) = (\tau_1^{(r)}(y,w), \ldots, \tau_m^{(r)}(y,w)),
\end{equation*}
where
\begin{equation*}
\tau_i^{(r)}(y,w) = y_i + \sum_{j=1}^n \tau_{ij}^{(r)}(y,
\varphi(y)^rw) w_j \quad \text{for } i=1,\ldots,m.
\end{equation*}
By construction, the map $\tau^{(r)}$ is of class $\C^k$. Furthermore,
\begin{equation*}
\tau^{(r)}(y,w)=
\begin{cases}
\tau(y, \varphi(y)^r w)  &\text{for } (y,w) \in U\times \R^n\\
y &\text{for } (y,w) \in (Y \setminus U)  \times \R^n.
\end{cases}
\end{equation*}
The proof is complete because the restrictions of $\tau^{(r)}$ to $U \times \R^n$ and $(Y \setminus U) \times \R^n$ are regular maps.
\end{proof}

Further study is needed to reveal the relationship between uniform rationality and $k$-malleability.

We consider $\R^n$ endowed with the Euclidean norm $\norm{-}$. If $A$ is a nonempty subset of~$\R^n$ and $x \in \R^n$, we write $\dist(x,A)$ for the Euclidean distance from $x$ to $A$.

\begin{lemma}\label{lem-2-10}
Let $X$ be a real algebraic variety, $n$ a nonnegative integer, and $U$ a Zariski open neighborhood of $X \times \{0\}$ in $X \times \R^n$. Then there exists a regular function $\varepsilon \colon X \to \R$ such that
\begin{equation*}
    \varepsilon(x) > 0 \quad \text{and} \quad
    \left(x, \varepsilon(x) \frac{v}{1 + \norm{v}^2}\right) \in U \quad
    \text{for all }(x,v) \in X \times \R^n.
\end{equation*}
\end{lemma}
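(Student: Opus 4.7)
Since every real algebraic variety is affine, I first fix an algebraic embedding $X \subseteq \R^m$; then $X \times \R^n$ is an algebraic subset of $\R^{m+n}$, and the complement $Z \coloneqq (X \times \R^n) \setminus U$ is Zariski closed in $\R^{m+n}$. The standard sum-of-squares trick (replace finitely many generators of the ideal of $Z$ by the sum of their squares) produces a polynomial $P \in \R[x_1,\dots,x_m,v_1,\dots,v_n]$ with $Z = \{P = 0\}$. Because $U$ contains $X \times \{0\}$, the polynomial $P(x, 0)$ is nowhere zero on $X$. The key elementary observation is that $\norm{v/(1 + \norm{v}^2)} \leq 1/2$ for every $v \in \R^n$, so it suffices to exhibit a strictly positive regular function $\varepsilon$ on $X$ such that $P(x, w) \neq 0$ whenever $x \in X$ and $\norm{w} \leq \varepsilon(x)/2$.

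To control $P$ near the zero section, I would expand in the $v$-variables, writing
\[
P(x, v) = P(x, 0) + \sum_{\alpha} c_\alpha(x)\, v^\alpha,
\]
where the sum is finite, runs over multi-indices $\alpha$ with $\abs{\alpha} \geq 1$, and each coefficient $c_\alpha$ is a polynomial in $x$. Setting $C(x) \coloneqq \sum_\alpha c_\alpha(x)^2$ and letting $N$ denote the number of terms in the sum, the bound $\norm{v}^{\abs{\alpha}} \leq \norm{v}$ for $\norm{v} \leq 1$ combined with Cauchy--Schwarz yields the uniform estimate $\abs{P(x, v) - P(x, 0)} \leq \sqrt{N\, C(x)}\, \norm{v}$ whenever $\norm{v} \leq 1$.

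Finally I take
\[
\varepsilon(x) \coloneqq \frac{P(x, 0)^2}{1 + P(x, 0)^2 + N\, C(x)},
\]
which is regular on $X$ (the denominator is at least $1$), strictly positive on $X$ (the numerator is nowhere zero), and bounded above by $1$, so $\norm{w} \leq \varepsilon(x)/2 \leq 1$ makes the preceding estimate available. Applying AM--GM to $P(x, 0)^2$ and $N\, C(x)$ gives $1 + P(x, 0)^2 + N\, C(x) \geq 2\abs{P(x,0)}\sqrt{N\, C(x)}$, whence $\sqrt{N\, C(x)}\, \varepsilon(x)/2 \leq \abs{P(x, 0)}/4 < \abs{P(x, 0)}$, the degenerate case $C(x) \equiv 0$ being trivial. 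Consequently $\abs{P(x, w) - P(x, 0)} < \abs{P(x, 0)}$, so $P(x, w) \neq 0$, that is, $(x, w) \in U$. The only step that is not purely mechanical is the passage from the (possibly many) defining polynomials of $Z$ to a single polynomial $P$ whose restriction $P(\cdot, 0)$ vanishes nowhere on $X$; once that is secured the rest is a matter of packaging positivity and smallness of $\varepsilon$ into a single rational formula, and the AM--GM trick delivers both simultaneously.
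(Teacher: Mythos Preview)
Your argument is correct. The Taylor-type expansion of $P$ in the $v$-variables together with the Cauchy--Schwarz/AM--GM packaging really does produce a strictly positive regular $\varepsilon$ with the required property; the only cosmetic point is that your parenthetical ``the degenerate case $C(x) \equiv 0$'' should read ``$C(x)=0$ at the point under consideration'', but the inequality $\sqrt{N C(x)}\,\varepsilon(x)/2 \le |P(x,0)|/4$ is trivially true there anyway, so nothing is lost.

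Your route is genuinely different from the paper's. The paper also chooses a nonnegative polynomial $\eta$ with zero set $Z$, but then invokes two semialgebraic facts from \cite{bib4}: a \L{}ojasiewicz-type inequality $\eta^N = h\cdot \dist(\,\cdot\,,Z)$ with $h$ continuous semialgebraic, and a polynomial growth bound $|h| \le c(1+\norm{\cdot}^2)^r$, to conclude that $\varepsilon(x)=\eta(x,0)^N/\bigl(2c(1+\norm{x}^2)^r\bigr)$ is at most $\tfrac12\dist((x,0),Z)$. You bypass both citations by exploiting that the defining function is a \emph{polynomial} in $v$, so the crude Lipschitz bound on $\{\norm{v}\le 1\}$ is available for free. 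What your approach buys is self-containment: no semialgebraic machinery is needed, and the resulting $\varepsilon$ is written down by an explicit rational formula in the coefficients of $P$. What the paper's approach buys is robustness: the \L{}ojasiewicz argument would go through verbatim if $\eta$ were merely regular (or even continuous semialgebraic) rather than polynomial, whereas your Taylor expansion leans on polynomiality in $v$.
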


\begin{proof}
We may assume that $X$ is an algebraic subset of $\R^m$ and $U \neq X \times \R^n$. Then we choose a polynomial function $\eta \colon \R^m \times \R^n \to \R$ such that $\eta(x,v) \geq 0$ for all $(x,v) \in \R^m \times \R^n$ and the zero set of $\eta$ is the algebraic subset $Z \coloneqq (X \times \R^n) \setminus U$ of $\R^m \times \R^n$. Since the distance function
\begin{equation*}
    \R^m \times \R^n \to \R, \quad (x,v) \mapsto \dist((x,v),Z)
\end{equation*}
is a continuous semialgebraic function whose zero set is $Z$, by \cite[Theorem~2.6.6]{bib4}, there exist a positive integer $N$ and a continuous semialgebraic function $h \colon \R^m \times \R^n \to \R$ such that
\begin{equation*}
    \eta(x,v)^N = h(x,v) \dist((x,v), Z) \quad \text{for all } (x,v) \in \R^m \times \R^n.
\end{equation*}
Thus, according to \cite[Proposition~2.6.2]{bib4}, there exist a real constant $c > 0$ and a positive integer $r$ such that
\begin{equation*}
    \abs{h(x,v)} \leq c(1 + \norm{(x,v)}^2)^r \quad
    \text{for all } (x,v) \in \R^m \times \R^n.
\end{equation*}
Consequently,
\begin{equation*}
    \eta(x,0)^N \leq c(1 + \norm{x}^2)^r \dist((x,0), Z) \quad
    \text{for all } x \in X.
\end{equation*}
It follows that the function $\varepsilon \colon X \to \R$ defined by
\begin{equation*}
    \varepsilon(x) = \frac{\eta(x,0)^N}{2c(1 + \norm{x}^2)^r} \quad \text{for all } x \in X
\end{equation*}
has the required properties.
\end{proof}

\begin{proposition}\label{prop-2-11}
Let $Y$ be a malleable nonsingular real algebraic variety. Then every Zariski open subset $Y_0$ of $Y$ is a malleable variety.
\end{proposition}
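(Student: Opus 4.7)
The plan is to take a dominating spray on $Y$, restrict it to $Y_0$, and then rescale the fiber direction (using a suitable regular function provided by Lemma~\ref{lem-2-10}) so that the map lands inside $Y_0$. Concretely, by Lemma~\ref{lem-2-5} we may fix a dominating spray $(Y \times \R^n, p, s)$ for $Y$ with product total space. The preimage $s^{-1}(Y_0)$ is Zariski open in $Y \times \R^n$, and it contains $Y_0 \times \{0\}$ since $s(y,0)=y$. Hence
\begin{equation*}
U \coloneqq s^{-1}(Y_0) \cap (Y_0 \times \R^n)
\end{equation*}
is a Zariski open neighborhood of $Y_0 \times \{0\}$ in $Y_0 \times \R^n$.

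Next, I would apply Lemma~\ref{lem-2-10} with $X=Y_0$ and this neighborhood $U$ to obtain a regular function $\varepsilon \colon Y_0 \to \R$ with $\varepsilon(y) > 0$ and
\begin{equation*}
\left(y, \varepsilon(y)\frac{v}{1+\norm{v}^2}\right) \in U \quad \text{for all } (y,v) \in Y_0 \times \R^n.
\end{equation*}
Define $s_0 \colon Y_0 \times \R^n \to Y_0$ by $s_0(y,v) = s\!\left(y, \varepsilon(y)\frac{v}{1+\norm{v}^2}\right)$. This is a well-defined regular map (the factor $v/(1+\norm{v}^2)$ is regular on $Y_0 \times \R^n$, the image lies in $Y_0$ by construction of $\varepsilon$, and composition with the regular map $s$ is regular). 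Moreover $s_0(y,0) = s(y,0) = y$, so $(Y_0 \times \R^n, p_0, s_0)$ is a regular spray for $Y_0$, where $p_0$ is the projection onto the first factor.

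It remains to check the dominating property. Writing $\psi(y,v) = \varepsilon(y)\,v/(1+\norm{v}^2)$, one has $\psi(y,0) = 0$ and the partial derivative with respect to $v$ at $v=0$ equals $\varepsilon(y)\,\mathrm{Id}_{\R^n}$. Therefore the derivative of $s_0$ at $(y,0)$, restricted to the vertical subspace $\R^n = T_{0_y}(\{y\} \times \R^n)$, satisfies
\begin{equation*}
d_{0_y} s_0(v) = \varepsilon(y) \cdot d_{0_y} s(v) \quad \text{for all } v \in \R^n.
\end{equation*}
Since $\varepsilon(y) > 0$ and $d_{0_y} s$ maps $\R^n$ onto $T_y Y = T_y Y_0$ by the dominating property of $(Y \times \R^n, p, s)$, we conclude that $d_{0_y} s_0(\R^n) = T_y Y_0$ for every $y \in Y_0$. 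Thus $(Y_0 \times \R^n, p_0, s_0)$ is a dominating (regular) spray for $Y_0$, which proves that $Y_0$ is malleable.

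The only mildly delicate point is ensuring that the rescaled map actually lands in $Y_0$ while remaining regular; this is precisely what Lemma~\ref{lem-2-10} is designed to handle, so I expect no serious obstacle. The factor $1/(1+\norm{v}^2)$ is used only to keep the argument of $\varepsilon(y)\,(\cdot)$ inside any prescribed neighborhood of the zero section uniformly in $v$, so that Lemma~\ref{lem-2-10} applies; the subsequent derivative computation is routine.
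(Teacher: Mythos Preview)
Your proof is correct and follows essentially the same route as the paper's: reduce to a product spray via Lemma~\ref{lem-2-5}, set $U = (Y_0 \times \R^n) \cap s^{-1}(Y_0)$, invoke Lemma~\ref{lem-2-10} to obtain the rescaling function $\varepsilon$, and define $s_0(y,v) = s\bigl(y,\varepsilon(y)\,v/(1+\norm{v}^2)\bigr)$. The only difference is cosmetic: the paper verifies domination by noting that the derivative of $v \mapsto v/(1+\norm{v}^2)$ at $0$ is an isomorphism, whereas you write out the chain-rule identity $d_{0_y}s_0(v) = \varepsilon(y)\,d_{0_y}s(v)$ explicitly.
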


\begin{proof}
By Lemma~\ref{lem-2-5}, there exists a spray $(E,p,s)$ for $Y$ such that $p \colon E = Y \times \R^n \to Y$ is the product vector bundle. Let $p_0 \colon Y_0 \times \R^n \to Y_0$ be the product vector bundle over~$Y_0$. Note that the set $U \coloneqq E_0 \cap s^{-1}(Y_0)$ is a Zariski open neighborhood of $Y_0 \times \{0\}$ in $E_0$. By Lemma~\ref{lem-2-10}, there exists a regular function $\varepsilon \colon Y_0 \to \R$ such that
\begin{equation*}
    \varepsilon(y) > 0 \quad \text{and} \quad
    \left(y, \varepsilon(y) \frac{v}{1 + \norm{v}^2}\right) \in U
    \quad \text{for all } (y,v) \in E_0.
\end{equation*}
Obviously, the map
\begin{equation*}
    s_0 \colon Y_0 \times \R^n \to Y_0, \quad
    (y,v) \mapsto s\left(y, \varepsilon(y) \frac{v}{1 + \norm{v}^2}\right)
\end{equation*}
is regular. Since the derivative of the map
\begin{equation*}
    \R^n \to \R^n, \quad v \mapsto \frac{v}{1 + \norm{v}^2}
\end{equation*}
at $0 \in \R^n$ is an isomorphism, it follows that $(E_0, p_0, s_0)$ is a dominating spray for $Y_0$. Thus $Y_0$ is a malleable variety.
\end{proof}

Proposition~\ref{prop-2-11} is a rich source of new examples of malleable real algebraic varieties.

\begin{example}\label{ex-2-12}
Let $Y$ be a real algebraic variety that is a homogeneous space for some linear real algebraic group. As recalled in Example~\ref{ex-2-6}, $Y$ is a malleable variety. Thus, by Proposition~\ref{prop-2-11}, every Zariski open subset of $Y$ is a malleable variety.
\end{example}

\begin{proposition}\label{prop-2-13}
Every uniformly rational real algebraic variety is locally malleable.
\end{proposition}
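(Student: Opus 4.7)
The plan is to reduce to the case $Y = V$, a Zariski open subset of $\R^n$, via the local biregular trivialization granted by uniform rationality, and then build a dominating local spray on $V$ using the natural additive translation of $\R^n$ after truncating it so it never leaves $V$. Fix a point $p \in Y$ and set $n = \dim Y$. By Definition~\ref{def-1-1}, choose a Zariski open neighborhood $U$ of $p$ in $Y$ and a biregular isomorphism $\psi \colon U \to V$, where $V$ is a Zariski open subset of~$\R^n$. Once I have a dominating local spray $\tilde\sigma \colon V \times \R^n \to V$, the composition $\sigma(y,v) \coloneqq \psi^{-1}(\tilde\sigma(\psi(y), v))$ is a regular map $U \times \R^n \to Y$ with $\sigma(y,0) = y$, and its partial derivative in the fiber variable at $v=0$ equals $d_{\psi(y)}\psi^{-1}$ composed with $d_0\tilde\sigma(\psi(y),\cdot)$, hence surjective as soon as $\tilde\sigma$ is dominating.

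The core construction is therefore the spray on $V$. Consider the Zariski open subset
\begin{equation*}
    W \coloneqq \{(z,v) \in V \times \R^n : z+v \in V\}
\end{equation*}
of $V \times \R^n$. Since every $z \in V$ yields $(z,0) \in W$, the set $W$ is a Zariski open neighborhood of $V \times \{0\}$ in $V \times \R^n$. The addition map $(z,v) \mapsto z+v$ is a regular map $W \to V$, but its domain is not the full product; to absorb this obstruction I apply Lemma~\ref{lem-2-10} (with $X=V$ there) and obtain a regular function $\varepsilon \colon V \to \R$ with $\varepsilon(z) > 0$ and
\begin{equation*}
    \left(z,\; \varepsilon(z) \tfrac{v}{1+\norm{v}^2}\right) \in W
    \quad \text{for all } (z,v) \in V \times \R^n.
\end{equation*}
Consequently the formula
\begin{equation*}
    \tilde\sigma \colon V \times \R^n \to V, \qquad
    \tilde\sigma(z,v) \coloneqq z + \varepsilon(z)\tfrac{v}{1+\norm{v}^2}
\end{equation*}
defines a regular map with $\tilde\sigma(z,0)=z$.

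It remains to verify that $\tilde\sigma$ is dominating. The derivative in $v$ of $v \mapsto v/(1+\norm{v}^2)$ at $v=0$ is the identity of $\R^n$, so the derivative of $\tilde\sigma(z,\cdot)$ at $0$ equals $\varepsilon(z) \cdot \mathrm{Id}_{\R^n}$, which is an isomorphism because $\varepsilon(z) > 0$. Transporting via $\psi$ as in the first paragraph, $\sigma \colon U \times \R^n \to Y$ is a dominating local spray for~$Y$ with $p \in U$, so $Y$ is locally malleable in the sense of Definition~\ref{def-2-7}. The only non-routine point in the argument is the rescaling device of Lemma~\ref{lem-2-10}, which is precisely what converts the partially defined regular map on $W$ into a globally defined regular map on $V \times \R^n$; once that lemma is available, the rest is a bookkeeping exercise combining the chart $\psi$ with the translation spray on $\R^n$.
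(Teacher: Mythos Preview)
Your proof is correct and follows essentially the same route as the paper: reduce via the uniform-rationality chart to a Zariski open subset $V\subseteq\R^n$, start from the translation spray $(z,v)\mapsto z+v$, and use the rescaling device of Lemma~\ref{lem-2-10} to force the image into $V$. The only difference is packaging: the paper first records as Proposition~\ref{prop-2-11} that every Zariski open subset of a malleable variety is malleable (whose proof is exactly the Lemma~\ref{lem-2-10} rescaling you carry out), and then invokes that proposition together with the obvious malleability of $\R^n$; you instead inline that argument for the special case $Y=\R^n$, $Y_0=V$.
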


\begin{proof}
According to Definition~\ref{def-1-1}, it suffices to prove that every Zariski open subset of~$\R^n$ is a malleable variety. This follows immediately from Proposition~\ref{prop-2-11} because $\R^n$~is a malleable variety (to see that $\R^n$ is malleable, consider the map $\R^n \times \R^n \to \R^n$, ${(y,v) \mapsto y+v}$).
\end{proof}

\begin{corollary}\label{cor-2-14}
Let $k$ be a positive integer. Every uniformly rational real algebraic variety is $k$-malleable.
\end{corollary}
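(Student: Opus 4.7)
The proof should be immediate from the machinery already assembled in the section. The plan is to combine Proposition~\ref{prop-2-13} with Proposition~\ref{prop-2-8}: by Proposition~\ref{prop-2-13}, a uniformly rational real algebraic variety $Y$ is locally malleable; by Proposition~\ref{prop-2-8}, every locally malleable nonsingular real algebraic variety is $k$-malleable for each positive integer $k$. Since a uniformly rational variety is nonsingular by definition (as noted in the paragraph following Definition~\ref{def-1-1}), the hypothesis of Proposition~\ref{prop-2-8} is satisfied, and the conclusion follows.

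There is no real obstacle here — all the work has already been done. The only thing to verify is that the chain of implications is valid as stated, i.e.\ that the $k$ in Corollary~\ref{cor-2-14} is required to be positive precisely because Proposition~\ref{prop-2-8} requires $k$ to be positive (the construction in that proposition relies on Lemma~\ref{lem-2-9}, which produces $\C^k$ smoothness by multiplying by a sufficiently high power $\varphi(y)^r$ of a defining function; the argument genuinely needs $k \geq 1$ to rule out the trivial case, and cannot reach $k = \infty$ since $r$ must be chosen depending on $k$). Thus the proof is a one-line citation of the two preceding propositions, and no additional constructions or estimates are needed.
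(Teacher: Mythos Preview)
Your proof is correct and matches the paper's own argument exactly: the paper's proof is the single sentence ``It suffices to combine Propositions~\ref{prop-2-8} and~\ref{prop-2-13}.'' Your additional remarks about why $k$ must be positive are reasonable context, though the cleanest reason is simply that $k$-malleability (Definition~\ref{def-2-4}) is only defined for $k\geq 1$, since a dominating spray requires taking derivatives.
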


\begin{proof}
It suffices to combine Propositions \ref{prop-2-8}~and~\ref{prop-2-13}.
\end{proof}

\section{Sections of malleable submersions}\label{sec:3}

We will use freely terminology and notation introduced in Section~\ref{sec:2}.

\begin{notation}\label{not-3-1}
Let $X$, $Z$ be nonsingular real algebraic
varieties, and let $h \colon Z \to X$ be a regular map that is a surjective submersion. Furthermore, let $V(h)$ denote the algebraic vector
subbundle of the tangent bundle $TZ$ to $Z$ defined by
\begin{equation*}
V(h)_z = \Ker(d_z h \colon T_zZ \to T_{h(z)}X) \quad \text{for all } z
\in Z,
\end{equation*}
where $d_z h$ is the derivative of $h$ at $z$. Clearly, $V(h)_z$ is the tangent space to the fiber $h^{-1}(h(z))$.
\end{notation}

Let $k$ be a nonnegative integer or $k=\infty$, $U$ an open subset of $X$, and $\XC$ a stratification of $X$. A~map $f \colon U \to Z$ is called a
\emph{section} (over $U$) of $h \colon Z \to X$ if $h(f(x))=x$ for all $x \in U$. A~section that is a $\C^k$ map is called a \emph{$\C^k$ section}. 
By a \emph{homotopy of $\C^k$ sections} we mean a
continuous map $F \colon U \times [0,1] \to Z$ such that $F_t \colon U \to Z$, $x \mapsto F(x,t)$ is a $\C^k$ section for every $t \in
[0,1]$. Two $\C^k$ sections $f_0,f_1 \colon U \to Z$ are said to be
\emph{homotopic through $\C^k$ sections} if there exists a homotopy $F \colon U \times [0,1] \to Z$ of $\C^k$ sections with $F_0=f_0$ and
$F_1=f_1$. 
A global section $g \colon X \to Z$ that is a $k$-regulous  (resp.\ $(k,\XC)$-regular) map is called a \emph{$k$-regulous} (resp.\ \emph{$(k,\XC)$-regular}) \emph{section}. We say that a $\C^k$ section $f \colon U \to Z$
can be \emph{approximated by global $k$-regulous} (resp. \emph{global $(k,\XC)$-regular}) \emph{sections in the $\C^k$ topology}
if for every neighborhood $\U$ of $f$ in the space $\C^k(U,Z)$ of all $\C^k$ maps there exists a global $k$-regulous (resp. global $(k,\XC)$-regular) section $g \colon X \to Z$ such that $g|_U$
belongs to~$\U$. 
To study approximation by global $k$-regulous or global $(k,\XC)$-regular sections, we need several notions and auxiliary results.

\begin{definition}\label{def-3-2}
Let $h \colon Z \to X$ be the submersion of Notation~\ref{not-3-1},  and $k$ a positive integer or $k=\infty$.
\begin{iconditions}
\item\label{def-3-2-i}
A \emph{$k$-regulous spray} for $h \colon Z \to X$ is a triple
$(E,p,s)$, where $p \colon E \to Z$ is an algebraic vector bundle
over $Z$ and $s \colon E \to Z$ is a $k$-regulous map such that
\begin{equation*}
s(E_z) \subseteq h^{-1}(h(z)) \quad \text{and} \quad s(0_z)=z
\quad \text{for all } z \in Z.
\end{equation*}
\item\label{def-3-2-ii}
A $k$-regulous spray $(E,p,s)$ for $h \colon Z \to X$ is said to be \emph{dominating} if the de\-riv\-a\-tive 
%\begin{equation*}
    $d_{0_z}s \colon T_{0_z}E \to T_zZ$
%\end{equation*}
maps the subspace $E_z = T_{0_z}E_z$ of $T_{0_z}E$ onto $V(h)_z$, that is,
\begin{equation*}
d_{0_z}s(E_z) = V(h)_z \quad \text{for all } z \in Z.
\end{equation*}

\item\label{def-3-2-iii}
The submersion $h \colon Z \to X$ is called \emph{$k$-malleable} if it admits a dominating $k$-regulous spray.
\end{iconditions}
For simplicity, $\infty$-regulous sprays, dominating $\infty$-regulous sprays and $\infty$-malleable submersions are called \emph{sprays}, \emph{dominating sprays} and \emph{malleable submersions}, respectively.
\end{definition}

Note that if $X$ is reduced to a point, then Definition~\ref{def-3-2} coincides with Definition~\ref{def-2-4}. Since $\infty$-regulous maps are regular, it follows that the concepts of spray, dominating spray, and malleable submersion in Definition~\ref{def-3-2} above are identical with those in \cite[Definition~3.2]{bib13}. Basic properties of dominating sprays for $h \colon Z \to X$ are established in \cite[Section~3]{bib13}. Taking into account all the necessary modifications, in the next lemmas we prove analogous results for $k$-regulous sprays.

\begin{lemma}\label{lem-3-3}
Let $h\colon Z \to X$ be the submersion of Notation~\ref{not-3-1},  and $k$ a positive integer or $k=\infty$. If the submersion $h\colon Z \to X$ is $k$-malleable, then it admits a
dominating $k$-regulous spray $(E,p,s)$ such that $p \colon E = Z \times \R^n \to Z$ is
the product vector bundle.
\end{lemma}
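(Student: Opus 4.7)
The plan is to mirror the proof of Lemma~\ref{lem-2-5} in the relative setting, replacing the base variety $Y$ with the total space $Z$ and replacing the tangent bundle $TY$ with the vertical subbundle $V(h)$. Concretely, I would start with an arbitrary dominating $k$-regulous spray $(\tilde E, \tilde p, \tilde s)$ for $h\colon Z\to X$, guaranteed by the assumption of $k$-malleability. Invoking \cite[Theorem~12.1.7]{bib4} for the algebraic vector bundle $\tilde p\colon\tilde E\to Z$, I would pick a nonnegative integer $n$ and a surjective algebraic morphism $\alpha\colon Z\times\R^n\to\tilde E$ from the product bundle $p\colon E = Z\times\R^n\to Z$ onto $\tilde p\colon\tilde E\to Z$. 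Then I would define $s\colon E\to Z$ by $s(z,v)=\tilde s(\alpha(z,v))$ and claim that $(E,p,s)$ is the desired dominating $k$-regulous spray.

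To verify the claim, four things must be checked. First, $s$ is $k$-regulous: the morphism $\alpha$ is regular, hence $k$-regulous, and $\tilde s$ is $k$-regulous by hypothesis, so Corollary~\ref{cor-2-3} gives that $s=\tilde s\circ\alpha$ is $k$-regulous. Second, $s$ maps each fiber $E_z$ into $h^{-1}(h(z))$: since $\alpha$ is a vector bundle morphism covering the identity on $Z$, $\alpha(E_z)\subseteq\tilde E_z$, and by the spray property of $\tilde s$ we have $\tilde s(\tilde E_z)\subseteq h^{-1}(h(z))$. Third, $s(0_z)=z$: the morphism $\alpha$ sends the zero section to the zero section, so $\alpha(0_z)=0_z$, and $\tilde s(0_z)=z$. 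Fourth, the dominating property: by the chain rule,
\begin{equation*}
    d_{0_z}s(E_z) = d_{0_z}\tilde s\bigl(d_{0_z}\alpha(E_z)\bigr).
\end{equation*}
The restriction $\alpha|_{E_z}\colon E_z\to\tilde E_z$ is a surjective linear map (fiberwise surjectivity follows from $\alpha$ being a surjective morphism of vector bundles over~$Z$), so its derivative at $0_z$ coincides with itself and maps $E_z$ onto $\tilde E_z$. Combining with the dominating property $d_{0_z}\tilde s(\tilde E_z)=V(h)_z$ of the original spray yields $d_{0_z}s(E_z)=V(h)_z$, as required.

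There is no genuine obstacle: the argument is a direct transcription of the proof of Lemma~\ref{lem-2-5}, once one notes that Corollary~\ref{cor-2-3} (closure of $k$-regulous maps under composition) and the surjective-onto-product-bundle result \cite[Theorem~12.1.7]{bib4} apply verbatim in this relative framework. The only point requiring minimal care is the fiberwise verification of the spray axioms, which follows because $\alpha$ is assumed to be a morphism of vector bundles and therefore respects fibers and zero sections.
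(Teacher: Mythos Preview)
Your proof is correct and follows exactly the same approach as the paper's own proof, which is equally a direct transcription of the proof of Lemma~\ref{lem-2-5}: start with an arbitrary dominating $k$-regulous spray, cover it by a surjective algebraic morphism from a trivial bundle, and compose. The paper's version is simply terser, omitting the fiberwise verifications you spell out.
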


\begin{proof}
Let $(\tilde E, \tilde p, \tilde s)$ be a
dominating $k$-regulous spray for $h \colon Z \to X$. Choose a nonnegative integer $n$ and a surjective algebraic morphism $\alpha \colon E \to \tilde E$ from the product
vector bundle $p \colon E = Z \times \R^n \to Z$
onto $\tilde p \colon \tilde E \to Z$.
By Corollary~\ref{cor-2-3}, the map $s \colon E \to Z$, $s(z,v) = \tilde s(\alpha(z,v))$ is regulous, so $(E,p,s)$ is a dominating $k$-regulous spray  for $h \colon Z \to X$.
\end{proof}

\begin{lemma}\label{lem-3-4}
Let $h\colon Z \to X$ be the submersion of Notation~\ref{not-3-1}, and $k$ a~positive integer or $k=\infty$. 
Suppose that $(E,p,s)$ is a dominating $k$-regulous spray for $h \colon Z \to X$. Let $U$ be an open subset of $X$ and let $F \colon U \times [0,1] \to Z$ be a homotopy of $\C^k$ sections of $h \colon Z \to X$. Let $U_0$ be an open subset of $X$ whose closure $\overline U_0$ is compact and contained in $U$. Let $t_0$ be a point
in $[0,1]$. Then there exist a neighborhood $I_0$ of $t_0$ in $[0,1]$
and a continuous map $\xi \colon X \times I_0 \to E$ such that
\begin{inthm}[widest=3.4.2]
\item\label{lem-3-4-1} $p(\xi(x,t)) = F(x,t_0)$ for all $(x,t) \in U_0
\times I_0$,

\item\label{lem-3-4-2} $\xi(x,t_0) = 0_{F(x,t_0)}$ for all $x \in U_0$,

\item\label{lem-3-4-3} $s(\xi(x,t)) = F(x,t)$ for all $(x,t) \in U_0
\times I_0$,

\item\label{lem-3-4-4} for every $t \in I_0$ the map $U_0 \to E$, $x
\mapsto \xi(x,t)$ is of class $\C^k$.
\end{inthm}
\end{lemma}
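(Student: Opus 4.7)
The core idea is that, for $t$ close to $t_0$, the path $t \mapsto F(x,t)$ lies in the fiber $h^{-1}(x)$ and starts at $F(x,t_0)$; since the spray $s$ sends $E_{F(x,t_0)}$ submersively onto $V(h)_{F(x,t_0)} = T_{F(x,t_0)}h^{-1}(x)$, we can pull this path back through $s$ to a path $\xi(x,t) \in E_{F(x,t_0)}$ starting at $0_{F(x,t_0)}$. The whole lemma then reduces to one parametric application of the implicit function theorem with $x$ as parameter.

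Concretely, pull $E$ back along the $\C^k$ section $F_{t_0} \colon U \to Z$ to form the $\C^k$ vector bundle $B \coloneqq F_{t_0}^*E$ over $U$, with fiber $B_x = E_{F(x,t_0)}$. The spray induces a $\C^k$ map $\bar s \colon B \to Z$, $\bar s(x,v) \coloneqq s(v)$, which respects $h$ (since $s(E_z) \subseteq h^{-1}(h(z))$) and carries the zero section of $B$ to $F_{t_0}$. Choose a $\C^\infty$ vector subbundle $E' \subset E$ of rank $\dim V(h)$ such that $d_0 s|_{E'_z} \colon E'_z \to V(h)_z$ is an isomorphism for every $z \in Z$; such an $E'$ is obtained by smoothing $\Ker(d_0 s|_E)$ (a priori only a $\C^{k-1}$ subbundle of $E$) to a $\C^\infty$ subbundle $\tilde K$ of the same rank and taking the orthogonal complement with respect to a $\C^\infty$ Riemannian metric on $E$, transversality to the genuine kernel being preserved by pointwise continuity. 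Setting $B' \coloneqq F_{t_0}^*E'$, a $\C^k$ subbundle of $B$, the derivative of $\bar s|_{B'} \colon B' \to Z$ at any zero-section point $(x,0)$ with $x \in \overline U_0$ is an isomorphism, because $T_{F_{t_0}(x)}Z = \mathrm{image}(d_xF_{t_0}) \oplus V(h)_{F_{t_0}(x)}$ and $d_0\bar s|_{B'_x}$ maps isomorphically onto the second summand.

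Apply the implicit function theorem, uniformly over the compact $\overline U_0$, to obtain an open neighborhood $N$ of $F_{t_0}(\overline U_0)$ in $Z$ and a $\C^k$ map $\rho \colon N \to B'$ with $\bar s \circ \rho = \mathrm{id}_N$ and $\rho \circ F_{t_0} = (\text{zero section})$ near $\overline U_0$. Continuity of $F$ on the compact set $\overline U_0 \times \{t_0\}$ then yields a neighborhood $I_0$ of $t_0$ in $[0,1]$ with $F(\overline U_0 \times I_0) \subset N$; define $\xi(x,t) \coloneqq \rho(F(x,t))$ on $U_0 \times I_0$. The four conditions are immediate: \ref{lem-3-4-1} holds because $\rho$ lands in $B' \subset B$, whose fiber over $x$ is contained in $E_{F(x,t_0)}$; \ref{lem-3-4-2} because $\rho(F_{t_0}(x)) = 0_{F(x,t_0)}$; \ref{lem-3-4-3} because $\bar s \circ \rho = \mathrm{id}$; and \ref{lem-3-4-4} because both $\rho$ and $F(\cdot,t)$ are $\C^k$. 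Finally, extend $\xi$ continuously from $U_0 \times I_0$ to $X \times I_0$ by a Tietze argument in local trivializations of $E$ patched by a continuous partition of unity on $X$; the extension is unconstrained by the four conditions.

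The principal obstacle is constructing $E'$ as a genuinely $\C^\infty$ subbundle transverse to the a priori only $\C^{k-1}$ kernel of $d_0 s|_E$, since a naive orthogonal-complement construction loses one derivative and would produce $\xi$ of class only $\C^{k-1}$ in $x$, violating \ref{lem-3-4-4}. Once the smoothing is in place, the remainder is a routine parametric implicit function theorem, with compactness of $\overline U_0$ supplying uniform size estimates both in $Z$ and in $[0,1]$.
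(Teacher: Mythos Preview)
Your proof is correct and follows essentially the same route as the paper's: both select a subbundle of $E$ complementary to the kernel of $v \mapsto d_{0_{p(v)}}s(v)$ with sufficient regularity, pull it back along $F_{t_0}$, and invert the resulting local $\C^k$ diffeomorphism near the zero section to read off $\xi$. The only cosmetic differences are that the paper produces its $\C^k$ complement $\hat E$ by approximating a $\C^{k-1}$ section of $\Hom(V(h),E)$ by a $\C^k$ one (rather than smoothing the kernel and taking an orthogonal complement, as you do), and that the paper does not carry out the Tietze extension to $X\times I_0$, since only $U_0\times I_0$ is used downstream.
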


\begin{proof}
Consider the $\C^{k-1}$ morphism
\begin{equation*}
    \alpha \colon E \to V(h), \quad v \mapsto d_{0_{p(v)}} s(v)
\end{equation*}
of algebraic vector bundles
(by convention, $\infty - 1 = \infty$). 
Since $\alpha$ is a surjective morphism, its kernel $K$ is a $\C^{k-1}$
 vector subbundle of~$E$. Hence $E$ can be written as the direct sum $E =  E' \oplus K$  for some $\C^{k-1}$ vector subbundle $E'$ of~$E$. The restriction $\alpha|_{E'} \colon E' \to V(h)$ is a $\C^{k-1}$ isomorphism of vector bundles, so we can choose a $\C^{k-1}$ morphism $\beta \colon V(h) \to E$ that induces an isomorphism of $V(h)$ onto $E'$. Let $\varphi$ be the global $\C^{k-1}$ section of the algebraic vector bundle $\Hom(V(h), E)$ that is determined by $\beta$. If $\psi$ is a $\C^k$ section of $\Hom(V(h), E)$ sufficiently close to $\varphi$ in the strong $\C^0$ topology (see \cite[p.~35]{bib27} for the definition of the strong $\C^0$ topology), then the $\C^k$ morphism $\gamma \colon V(h) \to E$ corresponding to $\psi$ is injective and the image of $\gamma$ is a $\C^k$ vector subbundle $\hat p \colon \hat E \to Z$ of $p \colon E \to Z$ such that $E = \hat E \oplus K$. By construction, the restriction $\alpha|_{\hat E} \colon \hat E \to V(h)$ is a $\C^{k-1}$ isomorphism.
 
 Let $f \colon U \to Z$ be a $\C^k$ section of $h \colon Z \to X$ defined by $f(x) = F(x,t_0)$ for all $x\in U$. Denote
by $p_f \colon \hat E_f \to U$ the pullback of the vector bundle $\hat p \colon \hat E
\to Z$ under the map $f$. Recall that $p_f \colon \hat E_f \to
U$ is a $\C^k$ vector bundle, where
\begin{equation*}
\hat E_f \coloneqq \{(x,v) \in U \times \hat E : f(x) = \hat p(v)\}, \quad p_f(x,v) = x.
\end{equation*}
Note that
\begin{equation*}
s_f \colon E_f \to Z, \quad s_f(x,v)=s(v)
\end{equation*}
is a $\C^k$ map.

Let $x \in U$. The zero vector in the fiber $(\hat E_f)_x$ is $(x,
0_{f(x)})$, where $0_{f(x)}$ is the zero vector in the fiber
$\hat E_{f(x)}$. Since $s_f(x,0_{f(x)}) = s(0_{f(x)}) = f(x)$, it follows
that $s_f$ induces a $\C^k$ diffeomorphism between the zero section $Z(\hat E_f)$ and $f(U)$.
Moreover, the derivative
\begin{equation*}
d_{(x, 0_{f(x)})} s_f \colon T_{(x, 0_{f(x)})} \hat E_f \to T_{f(x)}Z
\end{equation*}
is an isomorphism because
\begin{equation*}
d_{0_z} s|_{\hat E_z} = \alpha|_{\hat E_z} \colon \hat E_z \to V(h)_z
\end{equation*}
is an isomorphism for all $z \in Z$. Consequently, $s_f$ is a local
diffeomorphism at the point $(x,0_{f(x)})$. Thus, by \cite[(12.7)]{bib14b}, there exist an open
neighborhood $M \subseteq \hat E_f$ of the zero section $Z(\hat E_f)$ and an
open neighborhood $N \subset Z$ of $f(U)$ such that the restriction
$\sigma \colon M \to N$ of $s_f \colon \hat E_f \to Z$ is a $\C^k$ diffeomorphism.
Since $\overline U_0$ is a compact subset of $U$, we can choose an open
neighborhood~$I_0$ of $t_0$ in~$[0,1]$ such that $F_t(\overline U_0) \subseteq N$
for all $t \in I_0$. Therefore, for each $t \in I_0$, there exists a
unique $\C^k$ map $\zeta_t \colon U_0 \to \hat E_f$ satisfying
$\zeta_t(U_0) \subseteq M$ and $F_t(x) = \sigma(\zeta_t(x))$ for all $x
\in U_0$. We have $\zeta_t(x) = (\alpha_t(x),
\xi_t(x))$, where $\alpha_t \colon U_0 \to U$ and $\xi_t \colon U_0 \to
\hat E \subseteq E$ are $\C^k$ maps with
\begin{equation*}
f(\alpha_t(x)) = \hat p(\xi_t(x)) = p(\xi_t(x)) \quad \text{and} \quad s(\xi_t(x)) =
F_t(x).
\end{equation*}
By Definition~\ref{def-3-2}\ref{def-3-2-i}, $s(\xi_t(x)) \in
h^{-1}(h(p(\xi_t(x))))$, and hence
\begin{equation*}
h(s(\xi_t(x))) = h(f(\alpha_t(x))) = \alpha_t(x).
\end{equation*}
On the other hand,
\begin{equation*}
h(s(\xi_t(x))) = h(F_t(x)) = x.
\end{equation*}
Consequently, $\alpha_t(x)=x$. It follows that $\zeta_t \colon U_0 \to
\hat E_f$ is a $\C^k$ section, over $U_0$, of the vector bundle $p_f
\colon \hat E_f \to X$. Clearly, $\zeta_{t_0}(U_0) \subseteq Z(\hat E_f)$.
Furthermore, the map
\begin{equation*}
\zeta \colon U_0 \times I_0 \to \hat E_f, \quad (x,t) \mapsto \zeta_t(x)
\end{equation*}
is continuous. Note that $\zeta(x,t) = (x,\xi(x,t))$, where
\begin{equation*}
\xi \colon U_0 \times I_0 \to \hat E \subseteq E, \quad (x,t) \mapsto \xi_t(x)
\end{equation*}
is a continuous map with $p(\xi(x,t)) = f(x) = F(x,t_0)$ for all $(x,t)
\in U_0 \times I_0$. By construction, the map~$\xi$ satisfies conditions
\ref{lem-3-4-1}--\ref{lem-3-4-4}.
\end{proof}

The key lemma is the following.

\begin{lemma}\label{lem-3-5}
Assume that the submersion $h \colon Z \to X$ of Notation~\ref{not-3-1} is $k$-malleable, where $k$ is a nonnegative integer or $k=\infty$. Let $U$ be an open subset of $X$ and let ${F \colon U \times [0,1] \to Z}$ be a
homotopy of $\C^k$ sections of $h \colon Z \to X$. Let $U_0$ be an open subset of $X$ whose closure~$\overline U_0$ is compact and contained in $U$. Then there exist a dominating $k$-regulous spray
$(E,p,s)$ for $h \colon Z \to X$ and a continuous map $\xi \colon
U_0 \times [0,1] \to E$ such that $p \colon E = Z \times \R^n \to Z$
is the product vector bundle and $\xi(x,t) = (F(x,0), \eta(x,t))$ for
all $(x,t) \in U_0 \times [0,1]$, where the map $\eta \colon U_0 \times
[0,1] \to \R^n$ satisfies
\begin{inthm}[widest=3.5.3]
\item\label{lem-3-5-1} $\eta(x,0)=0$ for all $x \in U_0$,

\item\label{lem-3-5-2} $s(F(x,0),\eta(x,t)) = F(x,t)$ for all $(x,t) \in
U_0 \times [0,1]$,

\item\label{lem-3-5-3} for every $t \in [0,1]$ the map $U_0 \to \R^n$,
$x \mapsto \eta(x,t)$ is of class $\C^k$.
\end{inthm}
\end{lemma}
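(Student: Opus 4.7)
My plan is to chop $[0,1]$ into finitely many short intervals on which Lemma~\ref{lem-3-4} produces local lifts of $F$, and then splice these lifts together inside a new dominating $k$-regulous spray whose parameter space is a product of copies of $\R^n$.

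First, by Lemma~\ref{lem-3-3}, I may assume the given dominating $k$-regulous spray has the product form $(E,p,s)$ with $p\colon E = Z \times \R^n \to Z$. Applying Lemma~\ref{lem-3-4} with base point $t_0 = \tau$ for each $\tau \in [0,1]$ produces an open neighborhood $I_\tau \ni \tau$ in $[0,1]$ and a continuous lift $\xi^\tau\colon U_0 \times I_\tau \to E$. By compactness of $[0,1]$ I then choose a partition $0 = t_0 < t_1 < \cdots < t_m = 1$ with $[t_{j-1}, t_j] \subset I_{t_{j-1}}$ for every $j$; this is done by an inductive construction (at each step pick $t_j \in I_{t_{j-1}}$ as large as possible), termination being guaranteed by a Lebesgue-number argument applied to a finite subcover of $\{I_\tau\}_{\tau}$. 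Because $p$ is trivial, the lift on $U_0 \times [t_{j-1}, t_j]$ takes the form $\xi^{t_{j-1}}(x,t) = (F(x,t_{j-1}), \eta_j(x,t))$ with $\eta_j(x,t_{j-1}) = 0$, $s(F(x,t_{j-1}), \eta_j(x,t)) = F(x,t)$, and $x \mapsto \eta_j(x,t)$ of class $\C^k$ for each $t$.

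Next, I build the iterated spray. Define $s^*\colon Z \times (\R^n)^m \to Z$ by
\begin{equation*}
s^*(z, v_1, \ldots, v_m) = s\bigl(\ldots s(s(z, v_1), v_2) \ldots, v_m\bigr),
\end{equation*}
so that $s^*(z, 0, \ldots, 0) = z$ and the image stays in $h^{-1}(h(z))$ because each inner $s$ preserves fibers. Iterated application of Corollary~\ref{cor-2-3} shows $s^*$ is $k$-regulous. Setting $v_2 = \cdots = v_m = 0$ gives $s^*(z, v_1, 0, \ldots, 0) = s(z, v_1)$, so the partial derivative of $s^*$ at the origin along the first $\R^n$-factor is the fiberwise derivative of $s$ at $0_z$, which surjects onto $V(h)_z$; hence $(Z \times \R^{mn}, p^*, s^*)$ is a dominating $k$-regulous spray for $h$. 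I then glue the local lifts by setting, for $t \in [t_{j-1}, t_j]$,
\begin{equation*}
\eta^*(x,t) = \bigl(\eta_1(x,t_1), \ldots, \eta_{j-1}(x,t_{j-1}), \eta_j(x,t), 0, \ldots, 0\bigr),
\end{equation*}
so that a telescoping computation (the $i$-th application of $s$ for $i < j$ moves $F(x, t_{i-1})$ to $F(x, t_i)$, the $j$-th sends $F(x, t_{j-1})$ to $F(x,t)$, and the trailing zeros fix the point) yields $s^*(F(x,0), \eta^*(x,t)) = F(x,t)$. The relation $\eta_{j+1}(x, t_j) = 0$ makes the two piecewise definitions agree at each partition point $t_j$, so $\eta^*$ is continuous; $\eta^*(x,0) = 0$ and the $\C^k$-regularity in $x$ for each $t$ are clear from the construction. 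Setting $\xi(x,t) = (F(x,0), \eta^*(x,t))$ supplies the required map.

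The main obstacle is identifying the correct form of the new spray: one must compose $s$ with itself $m$ times along the fiber direction so that the parameter space is large enough to absorb the time-ordered concatenation of the local lifts, while still producing a \emph{dominating} $k$-regulous spray. The dominating property is inherited from $s$ via the first coordinate block, the $k$-regulous property rests on Corollary~\ref{cor-2-3}, and the piecewise formula for $\eta^*$ is then essentially forced by the requirement that each successive block undo the previous transport of the lift within the fiber $h^{-1}(h(F(x,0)))$.
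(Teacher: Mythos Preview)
Your proof is correct and follows essentially the same route as the paper's: reduce to a product spray via Lemma~\ref{lem-3-3}, subdivide $[0,1]$ using Lemma~\ref{lem-3-4} together with a Lebesgue-number/compactness argument, build the iterated spray $s^*(z,v_1,\dots,v_m)=s(\dots s(s(z,v_1),v_2)\dots,v_m)$ (the paper's $s^{(r)}$), and glue the local lifts by the identical piecewise formula for $\eta$. The only cosmetic difference is that you phrase the choice of partition as an inductive construction, whereas the paper invokes the Lebesgue lemma directly; both amount to the same thing.
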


\begin{proof}
By Lemma~\ref{lem-3-3}, the submersion $h \colon Z \to X$ admits a
dominating $k$-regulous spray $(\tilde E, \tilde p, \tilde s)$
such that $\tilde p \colon \tilde E = Z \times \R^m \to Z$ is the product
vector bundle. In view of Lemma~\ref{lem-3-4} and the compactness of the
interval $[0,1]$ (see the Lebesgue lemma for compact metric spaces
\cite[p.~28, Lemma~9.11]{bib14a}), there exists a partition $0=t_0 < t_1
< \cdots < t_r =1$ of~$[0,1]$ such that for each $i=1,\ldots,r$ there
exists a continuous map $\xi^i \colon U_0 \times [t_{i-1},t_i] \to \tilde
E$ with the following properties:
\begin{itemize}
\item $\xi^i(x,t) = (F(x,t_{i-1}), \eta^i(x,t))$ for all $(x,t) \in U_0
\times [t_{i-1},t_i]$,

\item $\eta^i(x, t_{i-1}) = 0$ for all $x \in U_0$,

\item $\tilde s(F(x,t_{i-1}), \eta^i(x,t)) = F(x,t)$ for all $(x,t) \in
U_0 \times [t_{i-1}, t_i]$,

\item for every $t \in [t_{i-1}, t_i]$ the map $U_0 \to \R^m$, $x
\mapsto \eta^i(x,t)$ is of class $\C^k$.
\end{itemize}
For $i=1,\ldots,r$ we define recursively a dominating $k$-regulous spray
$(E^{(i)}, p^{(i)}, s^{(i)})$ for $h \colon {Z \to X}$ by
\begin{equation*}
(E^{(i)}, p^{(i)}, s^{(i)}) = (\tilde E, \tilde p,\tilde s) \quad \text{if } i=1,
\end{equation*}
while for $i \geq 2$ we require
\begin{equation*}
p^{(i)} \colon E^{(i)} = Z \times (\R^m)^i \to Z
\end{equation*}
to be the product vector bundle and set
\begin{equation*}
s^{(i)} \colon E^{(i)} \to Z, \quad s^{(i)}(z, v_1,\ldots,v_i) =
s^{(1)}(s^{(i-1)}(z, v_1,\ldots,v_{i-1}), v_i),
\end{equation*}
where $z \in Z$ and $v_1,\ldots,v_i \in \R^m$ (the map $s^{(i)}$ is $k$-regulous by Corollary~\ref{cor-2-3}).

In particular, $(E,p,s) \coloneqq (E^{(r)}, p^{(r)}, s^{(r)})$
is a dominating $k$-regulous spray for ${h \colon Z \to X}$. Note that $p
\colon E = Z \times \R^n \to Z$ is the product vector bundle with $\R^n
= (\R^m)^r$. Now, consider a map
\begin{equation*}
\xi \colon U_0 \times [0,1] \to E, \quad \xi(x,t) = (F(x,0),
\eta(x,t)),
\end{equation*}
where $\eta \colon U_0 \times [0,1] \to \R^n = (\R^m)^r$ is defined by
\begin{equation*}
\eta(x,t) = (\eta^1(x,t), 0,\ldots,0)
\end{equation*}
for all $(x,t) \in U_0 \times [t_0,t_1]$, and
\begin{equation*}
\eta(x,t) = (\eta^1(x,t_1), \ldots, \eta^{i-1}(x,t_{i-1}), \eta^i(x,t),
0,\ldots, 0)
\end{equation*}
for all $(x,t) \in U_0 \times [t_{i-1}, t_i]$ with $i=2,\ldots,r$. One
readily checks that $\eta$ is a well-defined continuous map satisfying
\ref{lem-3-5-1}--\ref{lem-3-5-3}.
\end{proof}

Now we are ready to prove the following approximation result for sections.

\begin{theorem}\label{th-3-6}
Assume that the submersion $h\colon Z \to X$ of Notation~\ref{not-3-1} is $k$-malleable, where $k$ is a nonnegative integer or $k=\infty$. Let $U$ be an open subset of $X$ and let $f \colon U \to Z$ be a $\C^k$ section
of $h \colon Z \to X$ that is homotopic through $\C^k$ sections to the restriction $f_0|_U$ of a global
$k$-regulous section $f_0 \colon X \to Z$. Then 
$f$ can be approximated by global $k$-regulous sections.
\end{theorem}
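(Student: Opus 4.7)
The plan is to use Lemma \ref{lem-3-5} to reduce the problem of approximating $f$ by global $k$-regulous sections to the problem of approximating a $\C^k$ map into $\R^n$ by a regular map, and then to recover the global section by composing through the spray. A basic open neighborhood of $f$ in the weak $\C^k$ topology on $\C^k(U,Z)$ is specified by a compact set $K \subseteq U$ and an accuracy $\varepsilon > 0$, so it suffices to produce, for any such pair, a global $k$-regulous section $g \colon X \to Z$ with $g|_K$ within $\varepsilon$ of $f|_K$ in the $\C^k$ topology. I fix an open set $U_0$ with $K \subseteq U_0 \subseteq \overline{U}_0 \subseteq U$ and $\overline{U}_0$ compact, and, after reparametrizing the homotopy so that $F(\cdot,0) = f_0|_U$ and $F(\cdot,1) = f$, apply Lemma \ref{lem-3-5} to obtain a dominating $k$-regulous spray $(E,p,s)$ for $h$ with trivial bundle $p \colon E = Z \times \R^n \to Z$ and a continuous map $\eta \colon U_0 \times [0,1] \to \R^n$ whose time-one slice $\eta_1 \coloneqq \eta(\cdot,1)$ is of class $\C^k$ and satisfies $s(f_0(x), \eta_1(x)) = f(x)$ for all $x \in U_0$.

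The key step is then to approximate $\eta_1 \colon U_0 \to \R^n$ on $K$ in the $\C^k$ topology by a regular map $P \colon X \to \R^n$. Since $X$ is affine, I may take it to be a nonsingular algebraic subset of some $\R^m$; as $X$ is then a $\C^\infty$ submanifold of $\R^m$, the map $\eta_1$ admits a $\C^k$ extension to an open neighborhood of $\overline{U}_0$ in $\R^m$, and the classical Weierstrass-type approximation theorem for $\C^k$ maps on compact subsets of $\R^m$ yields a polynomial map $\R^m \to \R^n$ whose restriction $P$ to $X$ is arbitrarily $\C^k$-close to $\eta_1$ on $K$. As a polynomial map, $P$ is regular and in particular $k$-regulous.

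Finally, I define $g \colon X \to Z$ by $g(x) = s(f_0(x), P(x))$. Since $s$, $f_0$, and $P$ are all $k$-regulous, Corollary \ref{cor-2-3} shows that $g$ is $k$-regulous. The spray condition $s(E_z) \subseteq h^{-1}(h(z))$ forces $h(g(x)) = h(f_0(x)) = x$, so $g$ is a global section of $h$. The $\C^k$-closeness of $g|_K$ to $f|_K$ then follows from the continuity of composition in the weak $\C^k$ topology, using that $s$ and $f_0$ are $\C^k$ with bounded derivatives of order $\leq k$ on any compact neighborhood of $\{(f_0(x), \eta_1(x)) : x \in K\} \subset E$, together with the arbitrary $\C^k$-closeness of $P$ to $\eta_1$ on $K$. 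I expect the only genuinely technical point to be the Weierstrass-type $\C^k$ approximation for $\eta_1$ and the attendant chain-rule estimates for the composition; everything else follows formally from the machinery built in Lemmas \ref{lem-3-3}, \ref{lem-3-4}, \ref{lem-3-5} and from Corollary \ref{cor-2-3}.
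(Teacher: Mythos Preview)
Your proposal is correct and follows essentially the same approach as the paper's proof: apply Lemma~\ref{lem-3-5} to reduce to approximating $\eta_1 \colon U_0 \to \R^n$ by a regular map $\beta$ (your $P$) via Weierstrass, then set $g(x) = s(f_0(x), \beta(x))$ and invoke Corollary~\ref{cor-2-3} and the spray condition. The paper's write-up is terser---it simply appeals to ``the Weierstrass approximation theorem'' and omits the chain-rule discussion---but the argument is the same.
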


\begin{proof}
Let $F \colon U \times [0,1] \to Z$ be a homotopy of $\C^k$ sections
such that $F_0=f_0|_U$ and $F_1 = f$. Let $U_0$ be an open subset of $X$ whose closure $\overline U_0$ is compact and contained in $U$, and let $(E,p,s)$, $\xi \colon U_0 \times [0,1] \to E$, $\xi(x,t)
= (F(x,0), \eta(x,t))$, ${\eta \colon U_0 \times [0,1] \to \R^n}$ be as in
Lemma~\ref{lem-3-5}. In particular, we have
\begin{equation*}
s(f_0(x), \eta(x,1)) = s(F(x,0), \eta(x,1)) = F(x,1) = f(x) \quad \text{for all } x \in U_0.
\end{equation*}
By the Weierstrass approximation theorem, there exists a regular map $\beta \colon X \to \R^n$ such that the restriction $\beta|_{U_0}$ is arbitrarily close to the $\C^k$ map $\eta_1 \colon U_0 \to \R^n$, $x \mapsto \eta(x,1)$ in the $\C^k$ topology. Then
\begin{equation*}
    g \colon X \to Z, \quad x \mapsto s(f_0(x), \beta(x))
\end{equation*}
is a $k$-regulous map (by Corollary~\ref{cor-2-3}) such that $g|_{U_0}$ is close to $f|_{U_0}$ in the $\C^k$ topology. Moreover, in view of Definition~\ref{def-3-2}\ref{def-3-2-i}, $g \colon X \to Z$ is a section of $h \colon Z \to X$. The proof is complete because $U_0$ is chosen in an arbitrary way.
\end{proof}

We also have the following variant of Theorem~\ref{th-3-6}.

\begin{theorem}\label{th-3-7}
Assume that the submersion $h \colon Z \to X$ of Notation~\ref{not-3-1} is malleable. Let $\XC$ be a stratification of $X$, and $k$ a positive integer or $k=\infty$. Let $U$ be an open subset of $X$ and let $f \colon U \to Z$ be a $\C^k$ section of
$h \colon Z \to X$ that is homotopic through $\C^k$~sections to the restriction $f_0|_U$ of a global
$(k,\XC)$-regular section $f_0 \colon X \to Z$. Then $f$~can be approximated by global $(k,\XC)$-regular sections in the $\C^k$ topology.
\end{theorem}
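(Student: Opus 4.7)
The strategy is to run the same construction as in the proof of Theorem~\ref{th-3-6}, exploiting the stronger hypothesis that $h$ is malleable (not merely $k$-malleable) in order to preserve the stratified regularity of $f_0$ under composition with the spray.

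First I would pick an open subset $U_0 \subseteq X$ with compact closure $\overline{U_0} \subseteq U$ and apply Lemma~\ref{lem-3-5}. Since a malleable submersion is in particular $k$-malleable, the lemma applies. Moreover, a direct inspection shows that when one feeds in a \emph{regular} dominating spray (which malleability provides), every step of the construction preserves regularity: in Lemma~\ref{lem-3-3} the composite $s(z,v) = \tilde s(\alpha(z,v))$ of two regular maps is regular, and in the recursive definition of $s^{(i)}$ inside the proof of Lemma~\ref{lem-3-5} the iterated composites of regular maps remain regular. Hence I obtain a dominating \emph{regular} spray $(E,p,s)$ with $p \colon E = Z \times \R^n \to Z$ the product bundle and a continuous $\xi(x,t) = (F(x,0), \eta(x,t))$ on $U_0 \times [0,1]$ such that $\eta_1 \coloneqq \eta(-,1) \colon U_0 \to \R^n$ is of class $\C^k$ and $s(f_0(x), \eta(x,1)) = f(x)$ for all $x \in U_0$.

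Next, by the Weierstrass approximation theorem, I would choose a regular map $\beta \colon X \to \R^n$ whose restriction $\beta|_{U_0}$ approximates $\eta_1$ in the $\C^k$ topology as closely as desired, and then define
\begin{equation*}
    g \colon X \to Z, \qquad g(x) = s(f_0(x), \beta(x)).
\end{equation*}
Then $g$ is a global section of $h$ by Definition~\ref{def-3-2}\ref{def-3-2-i}, and is of class $\C^k$ since $f_0$ is $\C^k$ while $\beta$ and $s$ are regular. The crucial point is that $g$ is in fact $(k,\XC)$-regular: for each stratum $S \in \XC$, the map $x \mapsto (f_0(x),\beta(x))$ restricts on $S$ to the pair $(f_0|_S, \beta|_S)$ of regular maps, and composing this regular map into $E$ with the regular map $s \colon E \to Z$ yields a regular $g|_S$. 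Finally, the $\C^k$-closeness of $\beta|_{U_0}$ to $\eta_1$ together with the smoothness of $s$ yields the approximation of $f|_{U_0}$ by $g|_{U_0}$ in the $\C^k$ topology; since $U_0$ with $\overline{U_0} \subseteq U$ compact was arbitrary, this gives approximation of $f$ by global $(k,\XC)$-regular sections.

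The main obstacle, compared with Theorem~\ref{th-3-6}, is conceptual rather than computational. Were $s$ only $k$-regulous, Corollary~\ref{cor-2-3} would still deliver $k$-regulousness of $g$, but the Zariski closed indeterminacy locus $P(s)$ could cut a stratum $S$ into smaller pieces on which $g|_S$ would only be piecewise regular, destroying $(k,\XC)$-regularity. Strengthening the hypothesis from $k$-malleable to malleable produces a genuinely regular $s$, and it is precisely this that allows the stratified structure of $f_0$ to be transferred to the approximant $g$.
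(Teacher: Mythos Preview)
Your proposal is correct and follows essentially the same route as the paper: choose $U_0$, invoke Lemma~\ref{lem-3-5} to obtain a spray $s$ and a $\C^k$ map $\eta_1$ with $s(f_0(x),\eta_1(x))=f(x)$, approximate $\eta_1$ by a regular $\beta$ via Weierstrass, and set $g(x)=s(f_0(x),\beta(x))$. The only cosmetic difference is in how the regularity of $s$ is secured: the paper simply applies Lemma~\ref{lem-3-5} ``with $k=\infty$'' (tacitly decoupling the differentiability class of the spray from that of the homotopy $F$), whereas you apply it with the given $k$ and note by inspection that the construction in Lemmas~\ref{lem-3-3} and~\ref{lem-3-5} outputs a regular $s$ when fed a regular dominating spray---both justifications amount to the same observation.
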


\begin{proof}
Let $F \colon U \times [0,1] \to Z$ be a homotopy of $\C^k$ sections
such that $F_0=f_0|_U$ and $F_1 = f$. Let $U_0$ be an open subset of $X$ whose closure $\overline U_0$ is compact and contained in $U$, and let $(E,p,s)$, $\xi \colon U_0 \times [0,1] \to E$, $\xi(x,t)
= (F(x,0), \eta(x,t))$, ${\eta \colon U_0 \times [0,1] \to \R^n}$ be as in
Lemma~\ref{lem-3-5} (with $k=\infty$, in which case $s \colon E \to X$ is a regular map). In particular, we have
\begin{equation*}
s(f_0(x), \eta(x,1)) = s(F(x,0), \eta(x,1)) = F(x,1) = f(x) \quad \text{for all } x \in U_0.
\end{equation*}
By the Weierstrass approximation theorem, there exists a regular map $\beta \colon X \to \R^n$ such that the restriction $\beta|_{U_0}$ is arbitrarily close to the $\C^k$ map $\eta_1 \colon U_0 \to \R^n$, $x \mapsto \eta(x,1)$ in the $\C^k$ topology. Then
\begin{equation*}
    g \colon X \to Z, \quad x \mapsto s(f_0(x), \beta(x))
\end{equation*}
is a $(k, \XC)$-regular map such that $g|_{U_0}$ is close to $f|_{U_0}$ in the $\C^k$ topology. Moreover, in view of Definition~\ref{def-3-2}\ref{def-3-2-i}, $g \colon X \to Z$ is a section of $h \colon Z \to X$. The proof is complete because $U_0$~is chosen in an arbitrary way.
\end{proof}

The most important special cases of Theorems \ref{th-3-6}~and~\ref{th-3-7} are obtained by taking $U=X$.

\section{Applications}\label{sec:4}
To discuss applications of Theorems \ref{th-3-6}~and~\ref{th-3-7} we need the following observation. 

\begin{lemma}\label{lem-4-1}
Let $X$, $Y$ be nonsingular real algebraic varieties,  and $k$ a positive integer or $k=\infty$. Assume that the
variety $Y$ is $k$-malleable. Then the canonical projection
\begin{equation*}
h \colon X \times Y \to X, \quad (x,y) \mapsto x
\end{equation*}
is a $k$-malleable submersion.
\end{lemma}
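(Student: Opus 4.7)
The plan is to build a dominating $k$-regulous spray for the submersion $h \colon X \times Y \to X$ directly from a dominating $k$-regulous spray for $Y$, by keeping the first coordinate fixed and letting the $Y$-spray act on the second coordinate.

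By Lemma~\ref{lem-2-5}, since $Y$ is $k$-malleable, I may choose a dominating $k$-regulous spray $(E_Y, p_Y, s_Y)$ for $Y$ with $p_Y \colon E_Y = Y \times \R^n \to Y$ the product vector bundle. I then take $p \colon E \to X \times Y$ to be the product vector bundle $E = (X \times Y) \times \R^n$ and define
\begin{equation*}
s \colon E \to X \times Y, \qquad s(x,y,v) = (x, s_Y(y,v)).
\end{equation*}
The first component of $s$ is the regular projection $(x,y,v) \mapsto x$, while the second component is the composition of the regular projection $(x,y,v) \mapsto (y,v)$ with the $k$-regulous map $s_Y$, hence is $k$-regulous by Corollary~\ref{cor-2-3}. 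A straightforward check using Lemma~\ref{lem-2-2} then shows that the pair map $s$ is itself $k$-regulous, as the class $\C^k$ property holds componentwise and regularity holds on a Zariski open dense subset (the intersection of the respective loci for each component).

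Next I verify the axioms of Definition~\ref{def-3-2}. For all $(x,y) \in X \times Y$ and $v \in \R^n$ one has $h(s(x,y,v)) = x = h(x,y)$, so $s(E_{(x,y)}) \subseteq h^{-1}(h(x,y))$; and $s(0_{(x,y)}) = s(x,y,0) = (x, s_Y(y,0)) = (x,y)$, establishing that $(E,p,s)$ is a $k$-regulous spray for $h$. For dominance, observe that $V(h)_{(x,y)} = \{0\} \times T_y Y \subseteq T_x X \oplus T_y Y = T_{(x,y)}(X \times Y)$, and the restriction of $d_{0_{(x,y)}} s$ to the fiber subspace $E_{(x,y)} = \{(x,y)\} \times \R^n$ is the map $v \mapsto (0, d_{0_y} s_Y(v))$, which surjects onto $\{0\} \times T_y Y = V(h)_{(x,y)}$ because $(E_Y, p_Y, s_Y)$ was chosen dominating. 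Thus $(E,p,s)$ is a dominating $k$-regulous spray for $h \colon X \times Y \to X$, and the submersion $h$ is $k$-malleable.

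There is no real obstacle here; the whole argument is essentially a bookkeeping exercise showing that a spray on $Y$ extends trivially in the $X$-direction. The only point that deserves attention is ensuring that the extended map $s$ is $k$-regulous rather than merely $\C^k$, which is handled by Corollary~\ref{cor-2-3} together with the componentwise characterisation of $k$-regulous maps via stratifications (Lemma~\ref{lem-2-2}).
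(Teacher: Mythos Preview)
Your proof is correct and follows essentially the same approach as the paper: both extend a dominating $k$-regulous spray on $Y$ trivially in the $X$-direction. The only cosmetic difference is that you first invoke Lemma~\ref{lem-2-5} to reduce to a product bundle $E_Y = Y \times \R^n$, whereas the paper works with a general algebraic vector bundle $p \colon E \to Y$ and forms the pullback $\tilde E = \{((x,y),v) : y = p(v)\}$ over $X \times Y$; your construction is precisely the special case of the paper's when $E$ is already a product.
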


\begin{proof}
Let $(E,p,s)$ be a dominating $k$-regulous spray for $Y$. We obtain a
dominating $k$-regulous spray $(\tilde E, \tilde p, \tilde s)$
for $h \colon X \times Y \to X$ setting
\begin{align*}
&\tilde E = \{ ((x,y),v) \in (X \times Y) \times E : y=p(v)\},\\
&\tilde p \colon \tilde E \to X \times Y, \quad ((x,y),v) \mapsto
(x,y),\\
&\tilde s \colon \tilde E \to X \times Y, \quad ((x,y),v) \mapsto
(x,s(v)).\qedhere
\end{align*}
\end{proof}

\begin{theorem}\label{th-4-2}
Let $X$, $Y$ be nonsingular real algebraic varieties,  and $k$ a positive integer or $k=\infty$. Assume that the variety $Y$ is
$k$-malleable. Then, for a $\C^k$ map
$f \colon X \to Y$, the following conditions are equivalent:
\begin{conditions}
\item\label{th-4-2-a} $f$ can be approximated by $k$-regulous maps in the
$\C^k$ topology.

\item\label{th-4-2-b} $f$ is homotopic to a $k$-regulous map.
\end{conditions}
\end{theorem}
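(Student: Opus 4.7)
The strategy is to recast Theorem~\ref{th-4-2} as an approximation statement for sections of the trivial projection $h \colon X \times Y \to X$ and then apply Theorem~\ref{th-3-6}. By Lemma~\ref{lem-4-1}, $h$~is a $k$-malleable submersion. The essential tool is the \emph{graph correspondence}: a map $\varphi \colon X \to Y$ is of class $\C^k$ (respectively, $k$-regulous) if and only if its graph $\tilde\varphi \colon X \to X \times Y$, $x \mapsto (x, \varphi(x))$, is a $\C^k$ (respectively, $k$-regulous) section of $h$; moreover, approximation in $\C^k(X,Y)$ translates into approximation of graph sections in $\C^k(X, X \times Y)$, and a homotopy of maps $X \to Y$ translates into a homotopy of their graph sections.

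For the implication \ref{th-4-2-b} $\Rightarrow$ \ref{th-4-2-a}, start from a $k$-regulous map $g_0 \colon X \to Y$ homotopic to $f$. The first step is to upgrade the given continuous homotopy into one through $\C^k$ maps: since $f$ and $g_0$ are both of class $\C^k$ and $Y$ is a smooth manifold, a standard smoothing argument---approximate a continuous homotopy by a $\C^k$ one inside a tubular neighborhood, then interpolate near the endpoints to recover $f$ and $g_0$---yields a continuous map $H \colon X \times [0,1] \to Y$ with $H_0 = g_0$, $H_1 = f$, and every slice $H_t$ of class $\C^k$. The associated graph $\tilde H(x,t) = (x, H(x,t))$ is then a homotopy of $\C^k$ sections of $h$ joining $\tilde g_0$ to $\tilde f$. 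Apply Theorem~\ref{th-3-6} with $U = X$, $Z = X \times Y$, and $f_0 = \tilde g_0$ to conclude that $\tilde f$ admits $\C^k$-approximation by global $k$-regulous sections of~$h$. Each such section has the form $x \mapsto (x, g(x))$ with $g \colon X \to Y$ a $k$-regulous map, which delivers the desired $\C^k$-approximation of $f$.

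For the implication \ref{th-4-2-a} $\Rightarrow$ \ref{th-4-2-b}, the argument is the elementary one: since $Y$ is a smooth manifold, the $\C^k$ map $f$ has a neighborhood in $\C^k(X,Y)$ (indeed, already in the compact-open topology) consisting of maps homotopic to $f$; one constructs such a homotopy slice-by-slice via the exponential map of any auxiliary Riemannian metric on $Y$, applied to the straight-line homotopy between the two maps in a tubular neighborhood of the diagonal. Hence any $k$-regulous approximant of $f$ is homotopic to $f$.

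The main technical point is the smoothing step in \ref{th-4-2-b} $\Rightarrow$ \ref{th-4-2-a}: converting a merely $\C^0$-homotopy between two $\C^k$ maps into a homotopy whose slices are $\C^k$, while keeping the endpoints fixed. This is classical but requires some care to match the original $f$ and $g_0$; once arranged, the situation falls squarely within the scope of Theorem~\ref{th-3-6}, and everything else is bookkeeping under the graph correspondence.
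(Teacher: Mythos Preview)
Your argument for \ref{th-4-2-b}$\Rightarrow$\ref{th-4-2-a} is essentially the paper's: invoke Lemma~\ref{lem-4-1}, pass to graph sections of $h\colon X\times Y\to X$, smooth the homotopy so that each slice is $\C^k$, and apply Theorem~\ref{th-3-6} with $U=X$. Nothing to add there.

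For \ref{th-4-2-a}$\Rightarrow$\ref{th-4-2-b} there is a genuine gap in your justification. The variety $X$ is \emph{not} assumed compact, and the $\C^k$ topology used throughout the paper is the weak one (for $k=0$ it is the compact-open topology). A basic neighborhood of $f$ in this topology constrains the approximant $g$ only on finitely many compact subsets of $X$; away from these, $g$ is completely unconstrained, so there is no reason for $g(x)$ to lie in a tubular neighborhood of $f(x)$ for all $x$, and your exponential-map straight-line homotopy need not be globally defined. The paper fills this gap by invoking a structural property specific to real algebraic varieties: any nonsingular $X$ deformation retracts onto some compact subset $K\subset X$ \cite[Corollary~9.3.7]{bib4}. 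With this in hand one argues as follows: choose a compact-open neighborhood of $f$ forcing $g|_K$ close to $f|_K$; on the compact set $K$ your tubular-neighborhood argument does apply and gives $g|_K\simeq f|_K$; the deformation retraction then promotes this to $g\simeq f$ on all of $X$. So your conclusion is correct, but the reasoning you gave only works for compact $X$; the missing ingredient is precisely the deformation retract to a compact core.
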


\begin{proof}
The implication \ref{th-4-2-a}$\Rightarrow$\ref{th-4-2-b} holds because $X$ deformation retracts to some compact subset $K \subset X$ \cite[Corollary~9.3.7]{bib4}, and any two continuous maps from $K$ into $Y$ that are sufficiently close in the compact-open topology are homotopic (the latter assertion is valid if $Y$ is an arbitrary $\Cinfty$ manifold).

It remains to prove \ref{th-4-2-b}$\Rightarrow$\ref{th-4-2-a}. To this
end let $\Phi \colon X \times [0,1] \to Y$ be a homotopy such that
$\Phi_0$ is a $k$-regulous map and $\Phi_1=f$. We may assume that $\Phi$ is a
$\C^k$ map, see \cite[Proposition~10.22 and its proof]{bib48}. By
Lemma~\ref{lem-4-1}, the canonical projection $h \colon X \times Y \to
X$ is $k$-malleable. Since
\begin{equation*}
F \colon X \times [0,1] \to X \times Y, \quad (x,t) \mapsto (x,
\Phi(x,t))
\end{equation*}
is a homotopy of $\C^k$ sections of $h \colon X \times Y \to X$, it follows from Theorem~\ref{th-3-6} that the $\C^k$ section
\begin{equation*}
X \to X \times Y, \quad x \mapsto (x,f(x))
\end{equation*}
can be approximated by $k$-regulous sections in the $\C^k$ topology, hence \ref{th-4-2-a} holds.
\end{proof}

\begin{proof}[Proof of Theorem~\ref{th-1-2}]
By Corollary~\ref{cor-2-14}, the variety $Y$ is $k$-malleable, so it suffices to apply Theorem~\ref{th-4-2}.
\end{proof}

Theorem~\ref{th-4-2} not only implies Theorem~\ref{th-1-2}, but is in fact more general. Indeed, as recalled in Example~\ref{ex-2-6}, if $G$ is a linear real algebraic group, then any good $G$-space $Y$ is a malleable variety. However, it may be the case that $Y$ is not rational. This latter fact was communicated to me independently by Olivier Benoist and Olivier Wittenberg.

\begin{theorem}\label{th-4-3}
Let $X$, $Y$ be nonsingular real algebraic varieties, $\XC$ a stratification of $X$, and $k$ a positive integer or $k=\infty$. Assume that the variety
$Y$ is malleable. Then, for a
$\C^k$ map $f \colon X \to Y$, the following conditions are
equivalent:
\begin{conditions}
\item\label{th-4-3-a} $f$ can be approximated by $(k,\XC)$-regular maps.

\item\label{th-4-3-b} $f$ is homotopic to a $(k,\XC)$-regular map.
\end{conditions}
\end{theorem}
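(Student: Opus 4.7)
The plan is to follow almost verbatim the proof of Theorem~\ref{th-4-2}, substituting Theorem~\ref{th-3-7} for Theorem~\ref{th-3-6}. The hypothesis is slightly stronger here (the variety~$Y$ is malleable, i.e.\ $\infty$-malleable), which is exactly what is needed in order for Lemma~\ref{lem-4-1} to yield a \emph{malleable} (not just $k$-malleable) submersion.

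The implication \ref{th-4-3-a}$\Rightarrow$\ref{th-4-3-b} is again a soft argument: $X$ deformation retracts to a compact subset by \cite[Corollary~9.3.7]{bib4}, and on a compact set two continuous maps into a $\Cinfty$ manifold that are sufficiently close in the compact-open topology are homotopic. So if $f$ is approximated by a $(k,\XC)$-regular map~$g$ closely enough, $f$ and $g$ are homotopic.

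For \ref{th-4-3-b}$\Rightarrow$\ref{th-4-3-a}, pick a homotopy $\Phi \colon X \times [0,1] \to Y$ with $\Phi_0$ a $(k,\XC)$-regular map and $\Phi_1 = f$, and arrange, as in the proof of Theorem~\ref{th-4-2}, that $\Phi$ itself is of class $\C^k$ (using \cite[Proposition~10.22]{bib48}). By Lemma~\ref{lem-4-1} applied with $k=\infty$, the projection $h \colon X \times Y \to X$ is a malleable submersion. Then
\begin{equation*}
F \colon X \times [0,1] \to X \times Y, \quad (x,t) \mapsto (x,\Phi(x,t))
\end{equation*}
is a homotopy of $\C^k$ sections of~$h$. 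The key observation is that the global section $F_0 \colon X \to X \times Y$, $x \mapsto (x,\Phi_0(x))$ is itself $(k,\XC)$-regular: for each stratum $S \in \XC$, the map $F_0|_S(x) = (x, \Phi_0|_S(x))$ is regular because $\Phi_0|_S$ is regular, and $F_0$ is $\C^k$ because $\Phi_0$ is. Hence Theorem~\ref{th-3-7} (with $U=X$) applies and produces global $(k,\XC)$-regular sections approximating $x \mapsto (x,f(x))$ in the $\C^k$ topology. Composing with the projection $X \times Y \to Y$, which is regular, yields $(k,\XC)$-regular approximations of $f$, establishing \ref{th-4-3-a}.

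The only point that requires a small check is the dictionary between $(k,\XC)$-regular maps $X \to Y$ and $(k,\XC)$-regular sections of $h \colon X \times Y \to X$; this is immediate from Definition~\ref{def-2-1} because the graph $g(x)=(x,f(x))$ satisfies $g|_S = (\mathrm{id}_S,f|_S)$, which is regular on a stratum $S$ if and only if $f|_S$ is. I expect this verification to be the only subtle step; everything else is a mechanical transcription of the argument of Theorem~\ref{th-4-2}, relying on the malleability of~$Y$ exactly at the point where Theorem~\ref{th-3-7} is invoked.
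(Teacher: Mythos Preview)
Your proposal is correct and takes essentially the same approach as the paper, which simply says ``We proceed as in the proof of Theorem~\ref{th-4-2}, using Theorem~\ref{th-3-7} instead of Theorem~\ref{th-3-6}.'' You have filled in exactly the details this sentence summarizes, including the verification that $(k,\XC)$-regular maps $X\to Y$ correspond to $(k,\XC)$-regular sections of the projection $X\times Y\to X$.
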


\begin{proof}
We proceed as in the proof of Theorem~\ref{th-4-2}, using Theorem~\ref{th-3-7} instead of Theorem~\ref{th-3-6}.
\end{proof}

As recalled in Example~\ref{ex-2-6}, unit spheres are malleable varieties, so Theorem~\ref{th-4-3} implies immediately the following.

\begin{corollary}\label{cor-4-4}
Let $X$ be a nonsingular real algebraic variety, $\XC$ a stratification of $X$, $p$~a~nonnegative integer, and $k$ a positive integer or $k=\infty$.  Then, for a
$\C^k$ map ${f \colon X \to \SB^p}$, the following conditions are
equivalent:
\begin{conditions}
\item\label{cor-4-4-a} $f$ can be approximated by $(k,\XC)$-regular maps.

\item\label{cor-4-4-b} $f$ is homotopic to a $(k,\XC)$-regular map.\qed
\end{conditions}
\end{corollary}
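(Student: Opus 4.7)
The plan is essentially to invoke Theorem~\ref{th-4-3} with $Y = \SB^p$, so the only thing I need to verify is that $\SB^p$ satisfies the hypothesis of that theorem, namely that it is malleable as a real algebraic variety.

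First I would recall from Example~\ref{ex-2-6} that the orthogonal group $\ON(p+1) \subset \GL_{p+1}(\R)$ is a linear real algebraic group acting transitively by regular maps on $\SB^p$, so $\SB^p$ is a homogeneous $\ON(p+1)$-space. By the cited \cite[Proposition~2.8]{bib13}, every good $G$-space is malleable, and homogeneous good $G$-spaces qualify. Hence $\SB^p$ is a malleable nonsingular real algebraic variety. (For $p=0$ the claim is either vacuous or trivial, so I would not dwell on it.)

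With this in hand, the equivalence of \ref{cor-4-4-a} and \ref{cor-4-4-b} is a direct application of Theorem~\ref{th-4-3}: the implication \ref{cor-4-4-b}$\Rightarrow$\ref{cor-4-4-a} is its content, while \ref{cor-4-4-a}$\Rightarrow$\ref{cor-4-4-b} follows from the same soft argument already used in the proof of Theorem~\ref{th-4-2}, namely that $X$ deformation retracts onto a compact subset and any two continuous maps into a $\Cinfty$ manifold that are sufficiently $\C^0$-close on that compact set are homotopic.

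There is no substantive obstacle here; the corollary is essentially a relabelling of Theorem~\ref{th-4-3} in the special case of sphere targets. The only small care needed is to confirm the malleability input by pointing to Example~\ref{ex-2-6}, which has already done the work.
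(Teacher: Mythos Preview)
Your proposal is correct and takes essentially the same approach as the paper: the corollary is derived immediately from Theorem~\ref{th-4-3} once one notes (via Example~\ref{ex-2-6}) that $\SB^p$ is malleable.
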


Next we prove our results on approximation by nice $k$-regulous maps
announced in Section~\ref{sec:1}.

\begin{proof}[Proof of Theorem~\ref{th-1-6}]
\ref{th-1-6-a}$\Rightarrow$\ref{th-1-6-b}. It is sufficient to prove
that any nice $k$-regulous map $g \colon X \to \SB^p$ can be
approximated by adapted $\Cinfty$ maps in the $\C^k$ topology. By Sard's
theorem, there exists a regular value $y \in \SB^p \setminus g(P(g))$
for the map $g|_{X \setminus P(g)} \colon X \setminus P(g) \to \SB^p$.
Using partition of unity and radial projection $\R^{p+1} \setminus \{0\}
\to \SB^p$, we readily construct a $\Cinfty$ map $\tilde g \colon X \to
\SB^p$, arbitrarily close to~$g$ in the $\C^k$ topology, such that
$\tilde g^{-1}(y) = g^{-1}(y)$ and $\tilde g = g$ in a neighborhood of
$g^{-1}(y)$. By construction, $\tilde g^{-1}(y)$ is a nonsingular
Zariski locally closed subset of~$X$, so the map~$\tilde g$ is adapted.

\ref{th-1-6-b}$\Rightarrow$\ref{th-1-6-a}. Our argument
is based on Corollary~\ref{cor-4-4}, so to handle the case $k=0$ we choose
an integer $l > k$. We may assume without loss of generality that the $\Cinfty$ map~$f$ is
adapted. Let $y_0\in \SB^p$ be a regular value for~$f$ such that
$f^{-1}(y_0)$ is a nonsingular Zariski locally closed subset of~$X$. By
\cite[Theorems~2.4 and 2.5]{bib34}, there exists a nice $l$-regulous map
$\varphi \colon X \to \SB^p$, homotopic to~$f$, such that
$\varphi^{-1}(y_0) = f^{-1}(y_0)$ and $\varphi(P(\varphi)) \subseteq
\{-y_0\}$. Clearly,
\begin{equation*}
f(P(\varphi)) \subset \SB^p \setminus \{y_0\}.
\end{equation*}
Using Lemma~\ref{lem-2-2}\ref{lem-2-2-ii}, we choose a stratification~$\XC$
of~$X$ such that $X \setminus P(\varphi)$ is a stratum in~$\XC$ and the
map~$\varphi$ is $(l,\XC)$-regular. By Corollary~\ref{cor-4-4}, $f$
can be approximated by $(l,\XC)$-regular maps in the $\C^l$
topology. Consequently, $f$~can be approximated by $(k,\XC)$-regular
maps in the $\C^k$ topology. If $\tilde f \colon X \to \SB^p$ is a
$(k,\XC)$-regular map close to~$f$ in the $\C^k$ topology, then $\tilde
f(P(\varphi)) \subset \SB^p \setminus \{y_0\}$. Since $P(\tilde f)
\subseteq P(\varphi)$, the map $\tilde f$ is $k$-regulous and nice,
hence \ref{th-1-6-a} holds.

\ref{th-1-6-c}$\Rightarrow$\ref{th-1-6-b}. For the proof we may assume
that the map~$f$ is weakly adapted. Let $z_0 \in \SB^p$ be a regular
value for~$f$ such that the $\Cinfty$ submanifold $f^{-1}(z_0)$ of~$X$
admits a weak algebraic approximation. It follows that $f^{-1}(z_0)$ is
isotopic in~$X$, via an arbitrarily small $\Cinfty$ isotopy, to a
nonsingular Zariski locally closed subset~$Z$ of~$X$. Such an isotopy
can be extended to a $\Cinfty$ ambient isotopy of~$X$, close to the
identity map of~$X$ in the space $\Cinfty(X,X)$, see \cite[pp.~179,
180]{bib27}. Thus there exists a $\Cinfty$ diffeomorphism $\sigma \colon
X \to X$ such that $\sigma(f^{-1}(z_0)) = Z$ and the composite map $f
\circ \sigma^{-1}$ is close to~$f$ in the $\C^k$ topology. By
construction, $z_0$ is a regular value for the $\Cinfty$ map $f \circ
\sigma^{-1}$, and $(f \circ \sigma^{-1})^{-1}(z_0) = Z$. Consequently,
the map $f \circ \sigma^{-1}$ is adapted, as required.

The proof is complete since the implication
\ref{th-1-6-b}$\Rightarrow$\ref{th-1-6-c} is obvious.
\end{proof}

\begin{proof}[Proof of Theorem~\ref{th-1-9}]
\ref{th-1-9-d}$\Rightarrow$\ref{th-1-9-a}. Let $M$ be a compact
$\Cinfty$ submanifold of~$X$, with 
\begin{equation*}
2 \dim M + 1 \leq \dim X,
\end{equation*}
such that
the unoriented bordism class of the inclusion map $i \colon M
\hookrightarrow X$ is algebraic, that is, representable by a regular map
from a compact nonsingular real algebraic variety into~$X$. By Benoist's
theorem \cite[Theorem~2.7]{bib2}, $M$~admits an algebraic approximation
in~$X$, that is, for every neighborhood~$\U$ of~$i$ in the space
$\Cinfty(M,X)$ there exists a $\Cinfty$ embedding $e \colon M \to X$
in~$\U$ such that $e(M)$ is a nonsingular Zariski closed subset of~$X$
(in particular, $M$~admits a weak algebraic approximation in~$X$).

Now suppose that \ref{th-1-9-d} holds. By Sard's theorem, there exists a
regular value $y\in \SB^p$ for~$f$. It is well-known that the
$\Z/2$-homology class represented by the $\Cinfty$ submanifold
$f^{-1}(y)$ of~$X$ is Poincar\'e dual to the cohomology class
$f^*(\sigma_p) \in H^p(X;\Z/2)$, see \cite[Proposition~2.15]{bib14}.
Therefore, by \cite[Lemma~2.3]{bib43}, the unoriented bordism class of
the inclusion map ${f^{-1}(y) \hookrightarrow X}$ is algebraic.
Consequently, by the Benoist theorem mentioned above, the $\Cinfty$
submanifold $f^{-1}(y)$ admits an algebraic approximation in~$X$. Thus,
the $\Cinfty$ map~$f$ is weakly adapted, so, in view of
Theorem~\ref{th-1-6}, condition~\ref{th-1-9-a} holds.

\ref{th-1-9-c}$\Rightarrow$\ref{th-1-9-d}. Let $g \colon X \to \SB^p$ be
a nice regulous map homotopic to~$f$. Choose a regular value $z \in
\SB^p \setminus g(P(g))$ of the map $g|_{X \setminus P(g)} \colon X
\setminus P(g) \to \SB^p$. Clearly, the compact $\Cinfty$ submanifold
$g^{-1}(z)$ of~$X$ is a nonsingular Zariski locally closed subset. The
$\Z/2$-homology class represented by $g^{-1}(z)$ is Poincar\'e dual to
the cohomology class $g^*(\sigma_p) \in H^p(X; \Z/2)$. We have
$f^*(\sigma_p) = g^*(\sigma_p)$, the maps~$f,g$ being homotopic. It
follows that the cohomology class $f^*(\sigma_p)$ is adapted, and hence
\ref{th-1-9-d} holds.

The proof is complete since the implications
\ref{th-1-9-a}$\Rightarrow$\ref{th-1-9-b} and
\ref{th-1-9-b}$\Rightarrow$\ref{th-1-9-c} are obvious.
\end{proof}

\begin{acknowledgements}
The author was partially supported by the National
Science Center (Poland) under grant number 2018/31/B/ST1/01059.
\end{acknowledgements}

%\cleardoublepage
\phantomsection
\addcontentsline{toc}{section}{\refname}

\end{document}